\numberwithin{equation}{section}
\theoremstyle{plain}
\newtheorem{Main theorem}[equation]{Main Theorem}
\newtheorem{theorem}[equation]{Theorem}
\newtheorem{lemma}[equation]{Lemma}
\newtheorem{proposition}[equation]{Proposition}
\theoremstyle{definition}
\newtheorem{definition}[equation]{Definition}
\newtheorem{example}[equation]{Example}
\theoremstyle{plain}
\newtheorem{remark}[equation]{Remark}
\def\Aut{\operatorname{Aut}}
\def\aut{\operatorname{aut}}
\def\Hom{\operatorname{Hom}}
\def\End{\operatorname{End}}
\def\Sym{\operatorname{Sym}}
\def\Ker{\operatorname{Ker}}
\def\dim{\operatorname{dim}}
\def\Symb{\operatorname{Symb}}
\def\Gr{\operatorname{Gr}}
\def\Spin{\operatorname{Spin}}
\def\Sp{\operatorname{Sp}}
\def\SO{\operatorname{SO}}
\def\ad{\operatorname{ad}}
\def\Ad{\operatorname{Ad}}
\def\Sing{\operatorname{Sing}}
\def\rank{\operatorname{rank}}
\title[Geometric structures modeled on horospherical varieties]{Geometric structures modeled on\\ smooth projective horospherical varieties \\ of Picard number one}
\author[S. Kim]{Shin-young Kim}
\thanks{The author is supported by National Researcher Program 2010-0020413 of NRF}
\date{\today. Received May 26, 2015. Accepted October 24, 2016.}
\address{Korea Institute for Advanced Study, 85 Hoegiro, Dongdaemun-gu, Seoul, Korea}
\email{shinyoungkim@kias.re.kr}
\keywords{geometric structure, local equivalence, horospherical variety, Cartan geometry}
\subjclass[2000]{32M12, 53A55, 14J45}
\begin{document}

\maketitle

\begin{abstract}Geometric structures modeled on rational homogeneous manifolds are studied to characterize rational homogeneous manifolds and to prove their deformation rigidity. To generalize these characterizations and deformation rigidity results to quasihomogeneous varieties, we first study horospherical varieties and geometric structures modeled on horospherical varieties. Using Cartan geometry, we prove that a geometric structure modeled on a smooth projective horospherical variety of Picard number one is locally equivalent to the standard geometric structure when the geometric structure is defined on a Fano manifold of Picard number one.
\end{abstract}

\section*{Introduction}

 Let $M$ be a Fano manifold of Picard number one. An irreducible component $\mathcal K$ of the space of rational curves on $M$ is called a \emph{minimal dominating component} if the subvariety $\mathcal K_x$ consisting of members that pass through $x$ is nonempty and projective for a general point $x \in M$. The tangent directions at $x$ of members of $\mathcal K_x$ form a subvariety $\mathcal C_x$ of $\mathbb P T_x(M)$, which is called the \emph{variety of minimal rational tangents at $x$}. Many techniques can be used to study the projective geometries of $\mathcal C_x \subset \mathbb P T_x(M)$ which are believed to control the geometry of the manifold $M$. In this paper, we study geometric structures modeled on horospherical varieties which we expect to get from the variety of minimal rational tangents.

 When $S$ is a rational homogeneous manifold of Picard number one, a pair of the automorphism group of the variety of minimal rational tangent $\mathcal C_s$ and the linear span $D_s$ of the cone $\widehat{\mathcal C}_s\subset T_s(S)$ of $\mathcal C_s$ for $s\in S$ corresponds to the standard geometric structure on $S$. Jun-Muk Hwang, Ngaiming Mok, and Jaehyun Hong published significant work on the geometric structures modeled on $S$ that arise from the variety of minimal rational tangents. They published work on Hermitian symmetric manifolds and homogeneous contact manifolds in \cite{HM97}, \cite{Ho}, and \cite{Mok}, and on other rational homogeneous manifolds associated with long simple roots in \cite{HH}.

 \begin{theorem}[\cite{HM97}, \cite{Mok} and \cite{HH}]\label{Motivation1} Let $S=G/P$ where $G$ is a simple Lie group and $P$ is a maximal parabolic subgroup associated with a long root. Let $\mathcal C_s\subset \mathbb P T_s(S)$ be the variety of minimal rational tangents at a base point $s\in S$. Let $M$ be a Fano manifold of Picard number one and $\mathcal C_x$ be the variety of minimal rational tangents at a general point $x\in M$ associated with a minimal dominating rational component $\mathcal K$. Suppose that $\mathcal C_s\subset \mathbb P T_s(S)$ and $\mathcal C_x\subset \mathbb P T_x(M)$ are isomorphic as projective subvarieties for a general point $x \in M$. Then, $M$ is biholomorphic to $S$. \end{theorem}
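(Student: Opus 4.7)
The plan is to upgrade the projective isomorphism of varieties of minimal rational tangents to a flat geometric structure on $M$ modeled on $S$, and then to globalize via a Cartan-Fubini type extension. In outline: (i) the VMRT isomorphism is a first-order $G_0$-structure on $M$; (ii) this $G_0$-structure is locally flat; (iii) local flatness extends globally because $M$ and $S$ are Fano of Picard number one.

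First I would use the hypothesis to define, on a Zariski-open subset $M^{\circ}\subset M$ of general points, a reduction of the frame bundle to the group $G_0:=\Aut(\widehat{\mathcal C}_s)\subset \operatorname{GL}(T_s(S))$, by choosing at each $x\in M^{\circ}$ a linear isomorphism $T_x(M)\simeq T_s(S)$ carrying $\widehat{\mathcal C}_x$ to $\widehat{\mathcal C}_s$. Since $P$ is a maximal parabolic associated with a long root, $G_0$ coincides, up to center, with the reductive Levi of $P$, and the resulting $G_0$-structure is exactly the first-order datum of the standard geometric structure on $S$.

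Next I would show that this $G_0$-structure is locally flat, meaning that around each $x\in M^{\circ}$ there is a biholomorphism to an open subset of $S$ carrying $\mathcal C_x$ to $\mathcal C_s$. Following the Hwang-Mok-Hong strategy, flatness is controlled by Spencer-Tanaka obstructions living in certain positive-degree Lie algebra cohomology groups $H^{k,2}(\mathfrak m_{-},\mathfrak g)$ attached to the grading of $\mathfrak g$ cut out by $P$. Kostant's theorem is the decisive ingredient: in the long-root case, these groups either vanish outright, or the surviving components lie in weight spaces whose contribution must vanish when tested against tangent vectors of minimal rational curves, and the latter span $T_x(M)$ because $\mathcal C_x$ is nondegenerate. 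Consequently the characteristic connection of the $G_0$-structure has trivial curvature.

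Finally, once local flatness is established, a local biholomorphism $\varphi:U\to V\subset S$ preserving VMRTs exists near each general point. By the Cartan-Fubini type extension theorem of Hwang-Mok, any such local VMRT-preserving map between Fano manifolds of Picard number one propagates along chains of minimal rational curves to a global biholomorphism $M\to S$.

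The hard part is the middle step: controlling the second-order obstructions to flatness. This is exactly where \cite{HM97} (the Hermitian symmetric case), \cite{Mok} (the homogeneous contact case, via the symbol of the contact distribution), and \cite{HH} (the remaining long-root cases) each carry out delicate, type-specific Lie algebra cohomology computations. The long-root hypothesis is essential here: in the short-root setting, the analogous obstruction modules need not vanish and the conclusion of the theorem can fail, which is precisely what motivates the horospherical extension pursued in the rest of this paper.
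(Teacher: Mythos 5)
The paper itself offers no proof of this theorem: it is quoted as background from \cite{HM97}, \cite{Mok} and \cite{HH}, so your proposal can only be measured against the strategy of those references. Your overall architecture (VMRT isomorphism $\Rightarrow$ geometric structure $\Rightarrow$ local flatness via Tanaka--Kostant $\Rightarrow$ globalization by Cartan--Fubini extension) is indeed the architecture of the cited proofs. But your step (i) contains a genuine error that propagates into step (ii). You claim the VMRT isomorphism gives a first-order reduction of the frame bundle to $G_0=\Aut(\widehat{\mathcal C}_s)\subset GL(T_s(S))$, and that this group is, up to center, the reductive Levi of $P$. This is true only when $\widehat{\mathcal C}_s$ spans $T_s(S)$ (the Hermitian symmetric cases of \cite{HM97}). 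For the remaining long-root cases -- including the homogeneous contact ones -- $\widehat{\mathcal C}_s$ spans a proper distribution $D_s\subsetneq T_s(S)$, and the full stabilizer of the cone in $GL(T_s(S))$ is then far from reductive: any linear map preserving $D_s$ and restricting correctly on it preserves the cone, regardless of its action on $T_s(S)/D_s$ and the off-diagonal block. A naive first-order $G_0$-structure on all of $T(M)$ therefore carries too little information, and the actual proofs work instead with a filtered object: the distribution on $M$ spanned by the VMRTs together with a $G_0$-reduction of the graded frame bundle of the induced filtration -- precisely the kind of structure this paper formalizes in Definition \ref{G_0 structure} and Definition \ref{geometric ST}.

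Because of this, you are missing the step that makes the Tanaka/Spencer machinery applicable at all: one must first prove that the VMRT-spanned distribution $D\subset T(M)$ is, at general points, a regular differential system whose symbol algebra is isomorphic to $\frak m=\bigoplus_{p<0}\frak g_p$ (bracket-generating with the correct graded structure). In \cite{HH} this is a substantive argument relying on deformation theory of minimal rational curves, not a formal consequence of the pointwise projective isomorphism $\mathcal C_x\simeq\mathcal C_s$. Your flatness mechanism is also stated too loosely: in the linearly degenerate cases the tangents of minimal rational curves span only $D_x$, not $T_x(M)$, so ``the curvature vanishes because the minimal tangents span'' does not run; what is actually used is the positivity of $f^*T(M)$ along minimal rational curves through general points to kill the curvature of the associated connection (compare Proposition \ref{positive rational curve} here and the argument of \cite{HM97}), combined with the Kostant-type analysis of the degree-positive $H^{\bullet,2}$ components. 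With the filtered setup, the symbol verification, and this corrected flatness argument, your final appeal to the Cartan--Fubini extension theorem is the standard concluding step, as in \cite{Mok} and \cite{HH}.
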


 It is natural to ask what happens when we replace rational homogeneous manifolds with quasihomogeneous varieties, especially with smooth projective horospherical varieties of Picard number one. Horospherical varieties are complex normal algebraic varieties where a connected complex reductive algebraic group acts with an open orbit isomorphic to a torus bundle over a flag variety. Boris Pasquier classified smooth projective horospherical varieties of Picard number one in his paper \cite{Pa}. When a smooth horospherical variety is homogeneous, it is isomorphic to one of quadrics $Q^{2m}$, Grassmannians $\Gr(i+1,m+2)$, and spinor varieties $\Spin(2m+1)/P_{\alpha_m}$. These are compact irreducible Hermitian symmetric manifolds, and the geometric structures modeled on them have already been studied in Theorem \ref{Motivation1}.

 In this paper, we will study geometric structures modeled on smooth nonhomogeneous projective horospherical varieties of Picard number one.

 \begin{Main theorem}[Theorem \ref{Main Theorem}]Let $X$ be a smooth nonhomogeneous projective horospherical variety of Picard number one. Let $M$ be a Fano manifold of Picard number one. Then, any geometric structure of $M$ modeled on $X$ is locally equivalent to the standard geometric structure on $X$.
\end{Main theorem}

 We use Definition \ref{geometric ST} for the definition of a geometric structure modeled on $X$. We will prove the existence of Cartan connections (Proposition \ref{Cartan connection}) and use it to prove local equivalence of geometric structures modeled on smooth nonhomogeneous projective horospherical varieties of Picard number one.

 Noboru Tanaka (\cite{Ta70}, \cite{Ta79}) and Tohru Morimoto (\cite{Mo}) found the sufficient conditions for the existence of Cartan connections associated with geometric structures having certain symmetries, such as geometric structures modeled on rational homogeneous manifolds. We generalize these conditions for some quasihomogeneous manifold cases in Theorem \ref{connection}.

 To prove the existence of Cartan connections associated with geometric structures modeled on $X$, we need to study the Lie algebra $\frak{g}:=\frak{aut}(X)$ of the automorphism group of $X$. In particular, it is important to know whether $\frak g$ satisfies the prolongation property. Keizo Yamaguchi showed that $\frak g$ satisfies the prolongation property associated with a fundamental graded Lie subalgebra $\frak m$ of $\frak g$ by proving that the Lie algebra cohomology space $H^{p,1}(\frak m, \frak g)$ vanishes. When $X$ is a rational homogeneous manifold, $\frak g$ is a semisimple Lie algebra, and thus we can apply Kostant's harmonic theory to the Lie algebra cohomology spaces. However, in our case, $\frak g$ is not semisimple and we cannot apply Kostant's harmonic theory directly.

 Colleen Robles and Dennis The (\cite{RT}) computed Lie algebra cohomology spaces for some cases in which $\frak g$ is not semisimple by modifying Kostant's harmonic theory. In some ways, our embedding of $\frak{g}$ into $\frak{gl}(V)$ is similar to the embedding of a parabolic subalgebra into a simple graded Lie algebra of depth one, which appears in their paper \cite{RT}. It would be interesting if one can generalize Kostant's harmonic theory fully to the case where $\frak g$ is not semisimple. Instead of generalizing the whole of Kostant's harmonic theory, we reduce the vanishing of Lie algebra cohomology spaces to the vanishing of Lie algebra cohomology spaces associated with a maximal semisimple subalgebra of $\frak g$, which now can be computed using Kostant's harmonic theory.

 The remainder of this paper is organized as follows. In Section \ref{section1}, we review the theory of Cartan connections. In Section \ref{section2}, we prove that $\frak{aut}(X)$ satisfies the conditions for the existence of Cartan connections except the condition that the first generalized Spencer cohomology vanishes. In Section \ref{section3}, we compute the vanishing of the first generalized Spencer cohomology spaces of $\frak{aut}(X)$. In Section \ref{section4}, we prove the local flatness of the geometric structure modeled on $X$ and complete the proof of our main theorem.

 We work over the complex number field $\mathbb C$ without any additional mention of a number field. All manifolds, Lie groups, and Lie algebras will be understood as complex manifolds, complex Lie groups, and complex Lie algebras, respectively.

\section{Prolongations and Cartan connections}\label{section1}

\subsection{Prolongations}

\begin{definition}\label{gradation} Let $\frak g$ be a Lie algebra. A \emph{gradation of $\frak g$} is a direct decomposition $\frak g=\bigoplus_{p\in \mathbb Z} \frak g_p$ such that $[\frak g_p, \frak g_q]\subset \frak g_{p+q} \text{ for } p, q\in\mathbb Z$. A \emph{fundamental graded Lie algebra} is a nilpotent graded Lie algebra $\frak m=\bigoplus_{p<0} \frak g_p$ generated by $\frak g_{-1}$, i.e., $\frak g_p=[\frak g_{p+1}, \frak g_{-1}]$ for $p<-1$. Given a fundamental graded Lie algebra $\frak m=\bigoplus_{p<0} \frak g_p$, there exists a unique graded Lie algebra $\frak g(\frak m)=\bigoplus_{p\in \mathbb Z}\frak g_p(\frak m)$ such that
\begin{enumerate}
    \item[\rm(1)] $\frak g_p(\frak m)=\frak g_p$ for $p<0$
    \item[\rm(2)] if $z\in \frak g_p(\frak m)$ for $p\geq 0$ satisfies $[z, \frak m]=0$, then $z=0$.
    \item[\rm(3)] $\frak g(\frak m)$ is the largest graded Lie algebra satisfying conditions (1) and (2).
\end{enumerate}
We refer to $\frak g (\frak m)$ as the \emph{universal prolongation} of $\frak m$. Let $\frak g_0\subset \frak g_0(\frak m)$ be a subalgebra. Then, the \emph{prolongation} of $(\frak m, \frak g_0)$ is the largest graded Lie subalgebra $\frak g(\frak m, \frak g_0)=\bigoplus_p\frak g_p (\frak m, \frak g_0)$ of $\frak g(\frak m)$ such that $\bigoplus_{p<0}\frak g_{p}(\frak m, \frak g_0)=\frak m$ and $\frak g_0(\frak m, \frak g_0)=\frak g_0$.
\end{definition}

\begin{definition}\label{coboundary operator}Let $\frak m$ be a Lie algebra and $\Gamma$ be a vector space. Let $\gamma\colon \frak m \rightarrow End(\Gamma)$ be a representation of $\frak m$.

Define the \emph{coboundary operator} $\partial$ $\colon\Hom(\wedge^q\frak m, \Gamma)\rightarrow\Hom(\wedge^{q+1}\frak m, \Gamma)$ as follows: for $\phi\in\Hom(\wedge^q\frak m, \Gamma)$,
\begin{eqnarray*}
 &\partial\phi(z_1\wedge\cdots\wedge z_{q+1})=&\sum_{i=1}^{q+1}(-1)^{i+1}\gamma(z_{i})\phi(z_1\wedge \cdots \wedge \hat z_i \cdots \wedge z_{q+1}) \\
   &&+\sum_{i<j}(-1)^{i+j}\phi([z_i,z_j]\wedge z_{1}\cdots\wedge\hat{z}_{i} \cdots \wedge \hat{z}_{j} \cdots \wedge {z}_{q+1}),
 \end{eqnarray*}
 where $\hat{z}_{i}$ denotes skipping $z_i$.
 We denote the induced cochain complex by $(C(\frak m, \Gamma), \partial)$ and the derived space of the cohomology by $H(\frak m, \Gamma)$.
\end{definition}

\begin{definition} Let $\frak m=\bigoplus_{p <0}\frak g_p$ be a fundamental graded Lie algebra. Let $\Gamma =\bigoplus_{p\in \mathbb Z}\Gamma_p $ be a graded vector space and let $\gamma\colon \frak m \rightarrow End(\Gamma)$ be a representation of $\frak m$ on $\Gamma$ such that $\gamma(\frak g_{p})\Gamma_q \subset \Gamma_{p+q}$ for $p<0$ and $q \in \mathbb Z$. The cochain complex $(C(\frak m, \Gamma), \partial)$ has the following bigradation (Section 1 of \cite{Ta79} and Section 2.4 of \cite{Ya}):
 \begin{eqnarray*} C^{p,0}(\frak m, \Gamma)&=&\bigoplus_{p} \Gamma_{p-1} \\
 C^{p,q}(\frak m, \Gamma)&=&\bigoplus_{j\leq-q}\Hom(\wedge_{j}^{q}\frak m, \Gamma_{j+p+q-1}), \end{eqnarray*}
where
\begin{eqnarray*} \wedge_{j}^{q}\frak m=\bigoplus_{\scriptsize{\begin{array}{c}
                          i_1+\cdots+i_q=j\\
                          i_k<0\\
                         \end{array}}}
\frak g_{i_1}\wedge\cdots\wedge\frak g_{i_q}.
\end{eqnarray*}
Then the the coboundary operator $\partial$ maps $C^{p+1,q}(\frak m, \Gamma)$ to $C^{p,q+1}(\frak m, \Gamma)$ and the cohomology space with the bigradation
\begin{eqnarray*}H^q(\frak m, \Gamma)=\bigoplus_{p} H^{p,q}(\frak m, \Gamma)\end{eqnarray*}
is called \emph{the generalized Spencer cohomology space} of $(\Gamma, \frak m)$.
\end{definition}

Let $\frak g=\bigoplus_{p\in \mathbb Z}\frak g_p$ be a graded Lie algebra. Let $\frak m=\bigoplus_{p <0}\frak g_p$ be a graded Lie subalgebra of $\frak g$, which is fundamental. Let $\ad \colon \frak m \rightarrow End(\frak g)$ be the adjoint representation of $\frak m$ on $\frak g$ such that, for $z_1 \in \frak m$ and $z_2 \in \frak g$, $\ad(z_1)z_2=[z_1,z_2]$. The following is an effective way to show that a given graded Lie algebra $\frak g=\bigoplus_{p\in \mathbb Z}\frak g_p$ is the prolongation of $\frak m$ (or of $(\frak m, \frak g_0)$) related with the first (generalized) Spencer cohomology $H^1(\frak m, \frak g)=\bigoplus_{p} H^{p,1}(\frak m, \frak g)$.

\begin{lemma}[Lemma 2.1 of \cite{Ya}]\label{C.prolongation} Let $\frak g=\bigoplus_{p\in \mathbb Z}\frak g_p$ be a graded Lie algebra such that $\frak m=\bigoplus_{p <0}\frak g_p$ is fundamental. Then, $\frak g$ is the prolongation of $\frak m$ {\rm(resp. of $(\frak m, \frak g_0)$)} if and only if the following two conditions hold:
\begin{enumerate}
\item[\rm(1)] if $z\in \frak g_p$ for $p\geq 0$, satisfies $[z, \frak m]=0$, then $z=0$.
\item[\rm(2)] $H^{p,1}(\frak m, \frak g)=0$ for $p \geq 0\ (\text{resp. } p\geq 1)$.
\end{enumerate}
\end{lemma}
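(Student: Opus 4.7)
The plan is to interpret $H^{p,1}(\frak m, \frak g)$ as the cokernel of the adjoint map $\operatorname{ad}(\cdot)|_{\frak m} : \frak g_p \to \mathrm{Der}_p(\frak m, \frak g)$ from $\frak g_p$ into the space of degree-$p$ derivations, and to identify $\mathrm{Der}_p(\frak m, \frak g)$ with the graded piece $\frak g_p(\frak m)$ of the universal prolongation. Unpacking the cochain complex at $q = 1$: an element of $C^{p,1}(\frak m, \frak g)$ is a degree-$p$ graded linear map $\phi : \frak m \to \frak g$; the cocycle equation $\partial \phi = 0$ unwinds to the derivation identity $\phi([z_1, z_2]) = [z_1, \phi(z_2)] + [\phi(z_1), z_2]$; and the coboundary of $v \in \frak g_p = C^{p+1, 0}$ is the inner derivation $z \mapsto [z, v]$. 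Meanwhile, Tanaka's inductive construction realizes $\frak g_p(\frak m)$, for $p \geq 0$, as exactly the space of degree-$p$ derivations $\frak m \to \bigoplus_{k < p} \frak g_k(\frak m)$, acting on $\frak m$ via the bracket of $\frak g(\frak m)$. So condition (1) is the injectivity, and the vanishing in (2) is the surjectivity, of the natural map $\frak g_p \to \frak g_p(\frak m)$.

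For the forward implication, if $\frak g = \frak g(\frak m)$, condition (1) is the built-in non-degeneracy of the universal prolongation, while the identification above writes every 1-cocycle in $C^{p,1}(\frak m, \frak g(\frak m))$ tautologically as a coboundary, giving $H^{p,1} = 0$ for $p \geq 0$. For the reverse implication, assuming (1) and (2), I would build a graded Lie algebra embedding $\iota : \frak g \hookrightarrow \frak g(\frak m)$ inductively on degree: identity on $\frak m$, and $\iota(v) := \operatorname{ad}(v)|_{\frak m}$ on each $\frak g_p$ for $p \geq 0$, viewed as a degree-$p$ derivation of $\frak m$ into $\bigoplus_{k < p} \iota(\frak g_k) \subset \frak g(\frak m)$. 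Injectivity of $\iota$ on each graded piece is precisely (1); surjectivity onto $\frak g_p(\frak m)$ is precisely (2), since every such element is by construction a 1-cocycle in $C^{p,1}(\frak m, \frak g)$ and hence a coboundary by (2). The $(\frak m, \frak g_0)$ version is identical, with the induction starting at $p = 1$ and $\frak g_0 \subset \frak g_0(\frak m)$ prescribed from the outset, so (2) is required only for $p \geq 1$.

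The main bookkeeping obstacle is verifying that $\iota$, as just defined piece by piece, is a genuine graded Lie algebra homomorphism on all of $\frak g$, not merely on pairs in which one entry lies in $\frak m$. Compatibility on brackets $[\frak g_p, \frak g_q]$ for $p, q \geq 0$ follows by evaluating both sides on an arbitrary $z \in \frak m$, rewriting through the Jacobi identity in $\frak g$, and then invoking condition (1) to promote the resulting agreement on $\frak m$ to equality in $\frak g(\frak m)$, using that the adjoint action on $\frak m$ is faithful on every $\frak g_r$ with $r \geq 0$.
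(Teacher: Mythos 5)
Your argument is correct and is essentially the standard Tanaka--Yamaguchi proof that the paper itself does not reproduce but simply cites (Lemma 2.1 of [Ya]): identify bidegree-$(p,1)$ cocycles with degree-$p$ derivations $\frak m\to\bigoplus_{k<p}\frak g_k$, so that condition (1) is injectivity and condition (2) is surjectivity of $v\mapsto \ad(v)|_{\frak m}$ onto $\frak g_p(\frak m)$ (resp.\ $\frak g_p(\frak m,\frak g_0)$), and the two implications follow by the inductive construction of $\iota$. One small correction: when you promote agreement on $\frak m$ to the equality $\iota([v,w])=[\iota(v),\iota(w)]$, the faithfulness you need is the built-in nondegeneracy of the universal prolongation $\frak g(\frak m)$ (property (2) of Definition \ref{gradation}), not condition (1) of the lemma, since a priori $[\iota(v),\iota(w)]$ need not lie in the image $\iota(\frak g)$.
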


\subsection{Cartan connections}

\begin{definition}\label{filteredmanifold} Let $M$ be a manifold and $TM$ be the tangent bundle of $M$. A \emph{tangential filtration $F=\{F^p\}_{p\in\mathbb Z_{\leq 0}}$ on $M$} is a sequence of subbundles $F^p=F^pTM$ of $TM$ satisfying the following:
\begin{enumerate}
  \item $F^{p+1}\subset F^p$
  \item $F^0=0$ and $\cup F^p=TM$
  \item $[\mathcal F^p, \mathcal F^q]\subset \mathcal F^{p+q}$ for $p,q\in \mathbb Z_{\leq 0}$,
\end{enumerate}
where $\mathcal F^{\centerdot}$ is the sheaf of sections of $F^{\centerdot}$. We refer to $(M,F)$ as a \emph{filtered manifold}.

The \emph{symbol algebra $\Symb_x(F)=\bigoplus_{p\in\mathbb Z_{\leq 0}}\Symb^p_x(F)$ of $F$ at $x\in M$} is given by
\begin{eqnarray*}\Symb^p_x(F)=F^p_xTM/F^{p+1}_xTM\end{eqnarray*}
with a natural bracket induced by the Lie bracket of vector fields on $M$.

Let $\frak m=\bigoplus_{p<0} \frak g_p$ be a fundamental graded Lie algebra. A filtered manifold $(M,F)$ is called a \emph{regular filtered manifold of type $\frak m$} if the symbol algebras $\Symb_x(F)$ are all isomorphic to $\frak m$ for all $x\in M$.
\end{definition}

\begin{definition}\label{G_0 structure} Let $(M,F)$ be a regular filtered manifold of type $\frak m$. Let $\mathscr R_x(M, \frak m)$ be the set of all isomorphisms $\varsigma \colon \frak m\rightarrow \Symb_x(F)$ of graded Lie algebras. Then, with the structure group $G_{0}(\frak m)$ which consists of all automorphisms of the graded Lie algebra $\frak m=\bigoplus_{p<0}\frak g_{p}$, $\mathscr R:=\cup_{x\in M}\mathscr R_x(M, \frak m)$ is a principal $G_0(\frak m)$-bundle on $M$.
\begin{eqnarray*}
G_0(\frak m)\rightarrow &\mathscr R=\cup_{x\in M}\mathscr R_x(M, \frak m) \\
 & \downarrow \\
 & M
\end{eqnarray*}
This fiber bundle $\mathscr R$ is called the \emph{frame bundle of the regular filtered manifold $(M,F)$ of type $\frak m$}, or simply the \emph{frame bundle of $(M,F)$}.

Given a closed subgroup $G_0\subset G_{0}(\frak m)$, a \emph{$G_0$-structure} on $(M,F)$ is a $G_0$-subbundle of the frame bundle $\mathscr R$. Two $G_0$-structures on $(M_1,F_1)$ and $(M_2,F_2)$ are \emph{locally equivalent} if there exist two open subsets $U_1$ of $M_1$ and $U_2$ of $M_2$, and a $G_0$-bundle isomorphism over the open subsets $U_1$ and $U_2$.
\end{definition}

\begin{definition}
 A \emph{differential system} ($M$,$D$) on manifold $M$ is a subbundle $D$ of the tangent bundle $T(M)$ of $M$. The subbundle $D$ is \emph{completely integrable} if and only if $[D,D]\subset D$. For a non-integrable differential system $D$, we consider the \emph{derived system} $\partial D$ of $D$ which is defined in terms of sections by
\begin{eqnarray*}\partial \mathcal D=\mathcal D+[\mathcal D, \mathcal D],\end{eqnarray*}
 where $\mathcal D$ denotes the space of sections of $D$.
Moreover, \emph{the k-th weak derived systems} $\partial^{(k)} D$ of $D$ are inductively defined by
\begin{eqnarray*}\partial^{(k)}\mathcal D=\partial^{(k-1)} \mathcal D+[\mathcal D, \partial^{(k-1)}\mathcal D], \end{eqnarray*}
where $\partial^{(0)} D=D$ and $\partial^{(k)} \mathcal D$ denotes the space of sections of $\partial^{(k)}D$. A differential system ($M$,$D$) is said to be {\it regular} if $D^{-(k+1)}:=\partial^{(k)}D$ is a subbundle of $T(M)$ for every integer $k\geq1$.
 For a regular differential system ($M$,$D$) such that $D^{-\mu}=T(M)$, we define the associated graded algebra $\frak m(x)$ at $x\in M$, which was introduced by Noboru Tanaka in \cite{Ta70}. We put $\frak g_{-1}(x)=D^{-1}(x)$, $\frak g_{p}(x)=D^{p}(x)/D^{p+1}(x)$ (for $p<-1$) and
 \begin{eqnarray*}\frak m(x)=\bigoplus_{p=-1}^{-\mu}\frak g_{p}(x).\end{eqnarray*}
 Then, $\frak m(x)$ becomes a fundamental graded Lie algebra, which we refer to as \emph{the symbol algebra} of ($M$,$D$) at $x\in M$. If the symbol algebra $\frak m(x)$ is isomorphic to a given fundamental graded Lie algebra for each $x\in M$, then we refer to ($M$,$D$) as a {\it regular differential system of type $\frak m$}.
\end{definition}

\begin{definition} Let $\frak m=\bigoplus_{p<0} \frak g_p$ be a fundamental graded Lie algebra. A regular tangential filtration $(M,F)$ of type $\frak m$ derived from a regular differential system $(M,D)$ of type $\frak m$ is $F^p=D^p$ for $p \leq 0$.
We just denote $(M,D)$ as a \emph{regular filtered manifold derived from a regular differential system} $(M,D)$ of type $\frak m$. A \emph{$G_0$-structure on $(M,D)$} is a $G_0$-subbundle of the frame bundle $\mathscr R$ of $(M,D)$.
\end{definition}

\begin{definition} Let $\frak g$ be a Lie algebra and let $\frak h\subset \frak g$ be a subalgebra. Let $H$ be a connected Lie group with Lie algebra $\frak h$ and let $\Ad\colon H\rightarrow GL(\frak g)$ be the adjoint representation of $H$ on $\frak g$. A \emph{Cartan connection of type $(\frak g, H)$} on a manifold $M$ is a principal $H$-bundle $\pi \colon P\rightarrow M$ with $\frak g$-valued 1-form $\omega$ on $P$ such that
\begin{enumerate}
  \item[\rm(1)] $\omega(z^{\dagger})=z$ for $z\in \frak h$ where $z^\dagger$ denotes the fundamental vector field on $P$ induced by $z\in \frak h$;
  \item[\rm(2)] $R^*_h \omega=\Ad(h^{-1})\omega$ for $h\in H$ where $R_h \colon P\rightarrow P$ is the right action of $h\in H$ on $P$;
  \item[\rm(3)] the linear map $\omega_p \colon T_p(P)\rightarrow \frak g$ is a vector space isomorphism for each $p\in P$.
\end{enumerate}

Two Cartan connections of type $(\frak g, H)$, denoted by pairs $(P_1, \omega_1)$ on $M_1$ and $(P_2, \omega_2)$ on $M_2$, are \emph{locally equivalent} if there exist two open subsets $U_1$ of $M_1$ and $U_2$ of $M_2$, and a biholomorphic map $\phi \colon P_1|_{U_1}\rightarrow P_2|_{U_2}$ descending to $U_1\rightarrow U_2$ such that $\phi^* \omega_2=\omega_1$. A Cartan connection of type $(\frak g, H)$ is \emph{locally flat} if it is locally equivalent to the Cartan connection on the principal $H$-bundle $G\rightarrow G/H$ with the Maurer-Cartan form on $G$ where $G$ is a connected Lie group with Lie algebra $\frak g$ and an inclusion $H\subset G$ as a closed subgroup.
\end{definition}

Let $V$ and $W$ be vector spaces with filtration. Then
 \begin{eqnarray*}
 F^p\Hom(V,W)&=&\{\alpha \in \Hom(V,W) \ | \ \alpha(F^i V)\subset F^{i+p}W \ \forall i  \}\\
 F^p GL(V)&=&\{\alpha \in GL(V)\ |\ \alpha-1_V \in F^p Hom(V,V) \}\\
 F^p \Aut(V)&=&\Aut(V)\cap F^p GL(V)
 \end{eqnarray*}

\begin{definition}Let $\frak m=\bigoplus_{p<0}\frak g_{p}$ be a fundamental graded Lie algebra. Let $H$ be a Lie group and $E=\frak m \oplus \frak h$ where $\frak h$ is the Lie algebra of $H$. An \emph{skeleton} on $\frak m$ is a pair $(E, H)$ with a representation $\rho$ of $H$ on $E$ satisfying the followings:
\begin{enumerate}
  \item $\rho(h)z=Ad(h)z$ for $h\in H$, $z \in \frak h$.
  \item $\rho(h)F^p\frak m \subset F^p\frak m\oplus \frak h$ for $h \in H$ and $p<0$, where $F^p\frak m=\bigoplus_{p\leq i \leq -1}\frak g_{i}$
  \item There exist an filtration $\{F^p H\}$ on $H$, and hence there is induced filtration $F^{p}E=F^p\frak m \oplus F^p \frak h$ on $E$, where $F^p \frak h$ is the Lie algebra of $F^p H$ satisfying: for $p\leq 0$, $F^p H=H$ and, for $p\geq 0$, the sequences
\begin{eqnarray*}                                 1\longrightarrow F^{p+1}H \longrightarrow H \stackrel{\rho^{p}}{\longrightarrow} F^{0}Aut(E^{(p-1)})/F^{p+1}Aut(E^{(p-1)})
\end{eqnarray*}
are exact, where $E^{(p-1)}=E/F^{p}E$, and $\rho^{p}$ is the homomorphism induced by $\rho$.
\end{enumerate}
\end{definition}

\begin{definition}
Let $(M,F)$ be a regular filtered manifold of type $\frak m$ and $(E,H)$ be a skeleton on $\frak m$. A \emph{tower $P$} on $M$ with skeleton $(E,H)$ is a principal $H$-bundle $\pi: P \rightarrow M$ with an $E$-valued 1 form $\theta$ satisfying:
\begin{enumerate}
  \item[\rm(1)] the linear map $\theta_p \colon T_p(P)\rightarrow E$ is a filtered vector space isomorphism for each $p\in P$;
  \item[\rm(2)] $\theta(z^{\dagger})=z$ for $z\in \frak h$ where $z^\dagger$ denotes the fundamental vector field on $P$ induced by $z\in \frak h$;
  \item[\rm(3)] $R^*_h \theta=\Ad(h^{-1})\theta$ for $h\in H$ where $R_h \colon P\rightarrow P$ is the right action of $h\in H$ on $P$.
\end{enumerate}
\end{definition}

Let $\frak m$ be a fundamental graded Lie algebra. Let $G_0$ be a connected Lie subgroup of $G_0(\frak m)$ and let $\frak g_0$ be the Lie algebra corresponding to $G_0$. Let $\frak g(\frak m, \frak g_0)$ be the prolongation of the graded Lie algebra of $(\frak m, \frak g_0)$. Let $\frak M$ be a Lie group having $\frak m$ as its Lie algebra. The trivial subbundle $\frak M \times G_0$ of the frame bundle $\frak M \times G_0(\frak m)$ is the standard $G_0$-structure on $(\frak M, \frak m)$.

By Theorem 2.3.2 and Theorem 3.6.1 of [16], we can construct a tower $P$ on $\frak M$ with skeleton $(\frak g(\frak m, \frak g_0),H:=H(\frak m, G_0))$. More precisely, by Theorem 2.3.2 of \cite{Mo}, for a given $P^{(0)}:=\frak M \times G_0$, we can construct a unique tower $\mathcal RP^{(0)}$ satisfying the universal property: $\mathcal RP^{(0)}/F^{1}\mathcal RP^{(0)}= P^{(0)}$ and any tower $Q$ on $\frak M$ with $Q/F^{1}Q\subset P^{(0)}$ is embedded in the tower $\mathcal RP^{(0)}$. The tower $\mathcal RP^{(0)}$ is called \emph{universal tower prolonging $P^{(0)}$}. In the proof of Theorem 3.6.1 of \cite{Mo}, we get a tower $P^{(1)}$ and a surjective map $P^{(1)} \rightarrow \mathcal RP^{(0)}/F^{2}$ where $F^{2}=F^{2}RP^{(0)}$. Apply these two theorem again to obtain universal tower $\mathcal RP^{(1)}$ prolonging $P^{(1)}$, and a tower $P^{(2)}$ with a surjective map $P^{(2)} \rightarrow \mathcal RP^{(1)}/F^{3}$, where $F^{3}=F^{3}RP^{(1)}$. In this way, starting from $P^{(0)}:=\frak M \times G_0$, we get a tower $P$ which is the limit of the sequence of the bundles ($P^{(0)}$, $P^{(1)}$, $P^{(2)}$, $\cdots$).
\begin{displaymath}
\xymatrix{
\cdots& P^{(2)} \ar[dr] & \ & P^{(1)}\ar[dr] & \ & P^{(0)}\\
\mathcal RP^{(2)}/F^{4}\ar[ur] \ar[dr]& \  & \mathcal RP^{(1)}/F^{3} \ar[ur] \ar[dr] & \ & \mathcal RP^{(0)}/F^{2} \ar[ur] \ar[dr]\\
\cdots &0 \ar[ur] &  &0 \ar[ur] &  & 0
}
\end{displaymath}
are exact.
We denote the structure group of $P$ by
\begin{eqnarray}\label{H} H(\frak m, G_0).\end{eqnarray}

Let $\frak h(\frak m, \frak g_0)$ be the Lie algebra of the structure group $H(\frak m, G_0)$ of $P$. By the construction of $P$, we see $\frak h(\frak m, \frak g_0)=\bigoplus_{p\geq 0}\frak g_p(\frak m, \frak g_0)$, so that $\frak g(\frak m, \frak g_0)=\frak m \oplus \frak h(\frak m, \frak g_0)$. The group $G_0$ is embedded in $H(\frak m, G_0)$ as a closed subgroup.

We define a subspace of $\Hom(\wedge^2 \frak m, \frak g (\frak m, \frak g_0))$ as
\begin{eqnarray*}
F^1\Hom(\wedge^2 \frak m, \frak g (\frak m, \frak g_0)):=\{\alpha  | \alpha(\frak g_i \wedge \frak g_j)\subset \bigoplus_{p \geq i+j+1}\frak g (\frak m, \frak g_0)_{p} \text{ for } i, j <0 \}.
\end{eqnarray*}

There are general conditions for the existence of Cartan connections. Let $(M,F)$ be a regular filtered manifold of type $\frak m$, and let $R^{(0)}$ be a $G_0$-structure on $(M,F)$. Under the assumption of following Theorem, we can construct a principal $H(\frak m, G_0)$-bundle $R\rightarrow M$, which is obtained by extending the first order frame bundle $R^{(0)}$. The principal $H(\frak m, G_0)$-bundle $R\rightarrow M$ is a tower with skeleton $(\frak g(\frak g_0,\frak m),H(\frak m, G_0))$ with $\frak g(\frak g_0,\frak m)$-valued 1-form $\omega$. Then $(P, \omega)$ is a Cartan connection of type $(\frak g (\frak m, \frak g_0), H(\frak m,G_0))$. For further details, see Chapters 2 and 3 of \cite{Mo} and Theorem 2.7 of \cite{Ta79}.

\begin{theorem}[Definition 3.10.1 and Theorem 3.10.1 of \cite{Mo}, and Theorem 2.7 of \cite{Ta79}]\label{C.cartan connection}
 Let $(M,F)$ be a regular filtered manifold of type $\frak m$, and let $G_0$ be a Lie subgroup of $G_0(\frak m)$ with Lie algebra $\frak g_0$.
 Suppose that there exists a subspace $W$ of $F^1\Hom(\wedge^2 \frak m, \frak g (\frak m, \frak g_0))$ such that
\begin{enumerate}
  \item[\rm(1)] $F^1\Hom(\wedge^2 \frak m, \frak g (\frak m, \frak g_0))=W \oplus \partial F^1 \Hom(\frak m, \frak g (\frak m, \frak g_0))$,
  \item[\rm(2)] $W \text{ is stable under the action of } H:=H(\frak m, G_0)$.
\end{enumerate}
Then, for each $G_0$-structure on $(M,F)$, we can construct a principal $H$-bundle $P\rightarrow M$ associated with the $G_0$-structure on $(M,F)$ and obtain a Cartan connection $(P, \omega)$ of type $(\frak g (\frak m, \frak g_0), H)$. Two $G_0$-structures on $(M,F)$ are (locally) equivalent when the associated Cartan connections of type $(\frak g (\frak m, \frak g_0), H)$ are (locally) equivalent.
\end{theorem}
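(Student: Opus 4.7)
The plan is to carry out the Tanaka–Morimoto iterative prolongation of the given $G_0$-structure, using the complement $W$ to normalize torsion at each stage. Starting from $P^{(0)} := R^{(0)}$, the $G_0$-structure on $(M,F)$, I would inductively construct principal sub-bundles $P^{(0)} \leftarrow P^{(1)} \leftarrow P^{(2)} \leftarrow \cdots$ on $M$, together with partial $\frak g(\frak m, \frak g_0)/F^{k+1}\frak g(\frak m, \frak g_0)$-valued forms $\theta^{(k)}$. At the $k$-th step, first produce the universal prolongation $\mathcal R P^{(k)}$ (Theorem 2.3.2 of \cite{Mo}, as recalled in the discussion preceding the theorem), then invoke Theorem 3.6.1 of \cite{Mo} to obtain a tower $P^{(k+1)}$ surjecting onto $\mathcal R P^{(k)}/F^{k+2}$.

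The cohomological input enters as follows. On $\mathcal R P^{(k)}$ the partial form fails to satisfy a Maurer–Cartan-type structure equation by a torsion function
\[
c^{(k+1)} : \mathcal R P^{(k)} \longrightarrow F^1 \Hom(\wedge^2 \frak m, \frak g(\frak m, \frak g_0)),
\]
and a change of lift along an $F^{k+1}$-fibre modifies $c^{(k+1)}$ by an element of $\partial F^1 \Hom(\frak m, \frak g(\frak m, \frak g_0))$. Condition (1) is exactly what is needed to choose, at every point and uniquely, a lift whose torsion lies in $W$; this singles out the sub-tower $P^{(k+1)}$. Condition (2), the $H$-stability of $W$, is what guarantees that this fibrewise normalization assembles into a principal sub-bundle with the correct structure-group quotient $F^k H/F^{k+1} H$. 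After finitely many stages the procedure stabilizes (the non-negative part of $\frak g(\frak m, \frak g_0)$ is graded and vanishes above a finite degree), yielding a principal $H$-bundle $P \to M$; patching the forms $\theta^{(k)}$ then produces a $\frak g(\frak m, \frak g_0)$-valued 1-form $\omega$. The three Cartan-connection axioms follow directly: axiom (1) from the skeleton data, axiom (2) from the equivariance preserved at each stage, and axiom (3) from the filtered isomorphism property built into each tower.

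For the equivalence statement, a $G_0$-bundle isomorphism between two $G_0$-structures lifts uniquely through every stage, because at each stage the $W$-normalization forces the lift; in the limit this gives a biholomorphism $\phi : P_1|_{U_1} \to P_2|_{U_2}$ with $\phi^*\omega_2 = \omega_1$. Conversely, any local equivalence of Cartan connections descends through the quotient $P/F^1 H = P^{(0)}$ to a local $G_0$-bundle isomorphism on $M$, completing the (locally) equivalent $\Longleftrightarrow$ (locally) equivalent dictionary. The hardest step, and what I expect to require the most bookkeeping, is verifying at each stage that the torsion genuinely lands in the correct filtered piece $F^1\Hom(\wedge^2 \frak m, \frak g(\frak m, \frak g_0))$ and that the ambiguity in the lift is exactly parametrized by $\partial F^1\Hom(\frak m, \frak g(\frak m, \frak g_0))$, so that condition (1) cleanly isolates a unique $W$-valued representative and condition (2) promotes the pointwise reduction to a principal one. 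Once these bidegree and equivariance checks are in place, the inductive limit, the verification of the connection axioms, and the equivalence statement are essentially formal.
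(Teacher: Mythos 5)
Your outline is essentially the argument the paper relies on: the paper does not prove this statement itself but cites Morimoto (Definition 3.10.1 and Theorem 3.10.1) and Tanaka (Theorem 2.7), and its surrounding discussion recalls exactly the iterated tower construction $P^{(0)}, P^{(1)}, P^{(2)}, \dots$ with torsion normalized into $W$ that you describe. Your use of condition (1) to select a unique $W$-valued torsion representative and of condition (2) to make the reduction $H$-equivariant (hence principal, with the equivalence statement following by lifting and descending through $P/F^1H = P^{(0)}$) is the same mechanism as in the cited proof, so the proposal matches the paper's approach.
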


\begin{theorem}[Proposition 3.10.1 of \cite{Mo}]\label{Doubrov} Let $\frak m$ be a fundamental graded Lie algebra. Let $G_0$ be a Lie subgroup of $G_0(\frak m)$. Let $\frak g_0$ be the subalgebra of $\frak g_0(\frak m)$ corresponding to $G_0$, $\frak g =\frak g(\frak m, \frak g_0)$ be the prolongation of $(\frak m, \frak g_0)$, and $\frak h = \bigoplus_{p \geq 0} \frak g_p$ be its non-negative part. Assume that the prolongation $\frak g$ is finite-dimensional and that there exist a positive definite bilinear form
\begin{eqnarray*}( , ) : \frak g \times \frak g \to \mathbb R, \end{eqnarray*}
a mapping $\tau \colon \frak h \to \frak g$ and a mapping $\tau_0 \colon G_0 \to G_0$ such that
\begin{enumerate}
  \item[\rm(1)] $(\frak g_p, \frak g_q) = 0$ for $p \neq q$
  \item[\rm(2)] $\tau(\frak g_p)\subset \frak g_{-p}$ for $p \geq 0$, and $([A,z_1],z_2)=(z_1,[\tau(A),z_2])$ for all $z_1, z_2 \in \frak g$ and $A \in \frak h$
  \item[\rm(3)] $(az_1,z_2)=(z_1,\tau_0(a)z_2)$ for $z_1, z_2 \in \frak g$ and $a \in G_0$
\end{enumerate}
Then, there exists a full functor from the category of $G_0$-structures of type $\frak m$ to the category of Cartan connections of type $(\frak g,H)$, where $H$ is the Lie group $H(\frak m, G_0)$ with its Lie algebra $\frak h$.
\end{theorem}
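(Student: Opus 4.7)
The plan is to verify the hypotheses of Theorem \ref{C.cartan connection}, namely to produce a subspace $W \subset F^1\Hom(\wedge^2\frak m, \frak g)$ complementary to $\partial F^1\Hom(\frak m, \frak g)$ and stable under $H = H(\frak m, G_0)$. The strategy is Hodge-theoretic: the data $((\cdot,\cdot),\tau,\tau_0)$ is exactly what is needed to construct a formal adjoint $\partial^*$ of the Spencer coboundary $\partial$, and then $W$ will be the intersection $\Ker \partial^* \cap F^1\Hom(\wedge^2\frak m, \frak g)$.

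First I would extend $(\cdot,\cdot)$ from $\frak g$ to a positive definite bilinear form on each cochain space $\Hom(\wedge^q \frak m, \frak g)$ in the canonical way, using the restriction of $(\cdot,\cdot)$ to $\frak m \subset \frak g$ on the source. I would then define $\partial^*\colon\Hom(\wedge^{q+1}\frak m, \frak g)\to\Hom(\wedge^q\frak m,\frak g)$ by the formula obtained from Definition \ref{coboundary operator} by replacing each action of $z_i \in \frak m$ with the action of $\tau(z_i) \in \frak h$, with the corresponding transposition of indices. The adjoint identity $([A,z_1],z_2)=(z_1,[\tau(A),z_2])$ of condition (2), combined with the orthogonality condition (1), translates precisely into the formal adjoint relation $(\partial\phi,\psi)=(\phi,\partial^*\psi)$. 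A standard Hodge argument then produces an orthogonal decomposition $\Hom(\wedge^2\frak m,\frak g)=\Ker\partial^*\oplus\partial\Hom(\frak m,\frak g)$. Because $\tau$ reverses the grading (condition (2) gives $\tau(\frak g_p)\subset\frak g_{-p}$), $\partial^*$ behaves symmetrically to $\partial$ with respect to the bigrading on $C^{p,q}(\frak m,\frak g)$, so intersecting the decomposition with $F^1$ yields $F^1\Hom(\wedge^2\frak m,\frak g)=W\oplus\partial F^1\Hom(\frak m,\frak g)$ for $W:=\Ker\partial^*\cap F^1\Hom(\wedge^2\frak m,\frak g)$, which is the first hypothesis of Theorem \ref{C.cartan connection}.

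For the $H$-stability of $W$, the $G_0$-invariance follows directly from condition (3): the bilinear form is $G_0$-invariant up to the twist by $\tau_0$, so $\partial^*$ is $G_0$-equivariant and hence $\Ker\partial^*$ is $G_0$-stable. Because $\frak g$ is finite-dimensional, $\frak h=\bigoplus_{p\geq 0}\frak g_p$ decomposes as $\frak g_0$ plus its nilpotent part, and $H$ is accordingly the semidirect product of $G_0$ with the unipotent group $\exp\bigl(\bigoplus_{p\geq 1}\frak g_p\bigr)$. Condition (2) says that the adjoint action of $A\in\frak g_p$ is paired by the bilinear form with the adjoint action of $\tau(A)\in\frak g_{-p}$, which translates into an infinitesimal preservation of $\Ker\partial^*$ by $\ad(\frak h)$; exponentiating yields the required $H$-stability. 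With both hypotheses of Theorem \ref{C.cartan connection} verified, each $G_0$-structure on $(M,F)$ of type $\frak m$ gives rise to a Cartan connection $(P,\omega)$ of type $(\frak g, H)$, and the naturality of the Hodge construction in morphisms of $G_0$-structures promotes this correspondence to a full functor.

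The main obstacle will be the careful interaction of filtration and grading: in particular, ensuring that the Hodge splitting restricts cleanly to $F^1$ and is preserved by the unipotent part of $H$, where the algebraic relation between $\tau$ on positive degrees and the adjoint action must be exploited with care. A secondary technicality is verifying that morphisms of $G_0$-structures intertwine the induced Cartan connections, which amounts to tracing through the universal tower construction described before Theorem \ref{C.cartan connection} and checking that the invariantly defined $W$ extends functorially to the principal $H$-bundle and its connection form.
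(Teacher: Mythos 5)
Your proposal is correct and follows essentially the same route as the paper: the paper quotes Theorem \ref{Doubrov} from Morimoto but proves its generalization, Theorem \ref{connection}, by exactly your argument — extend the form to cochains, form the adjoint $\partial^*$ built from $\tau$ and $\tau_0$, set $W=\Ker\partial^*\cap F^1\Hom(\wedge^2\frak m,\frak g)$, check stability under $G_0$ and under $\exp$ of the positive part of $\frak h$ via the decomposition $a=a_0\cdot\exp(A)$, and feed $W$ into Theorem \ref{C.cartan connection}. The only ingredient you leave implicit is the standard commutation of $\partial$ with the actions of $G_0$ and of $\frak m$ on the cochain complex (the paper's identity (\ref{c1})), which is precisely what turns the adjointness relations (2)--(3) into the $H$-equivariance of $\partial^*$ in the paper's computation.
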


The following theorem is essentially from Proposition 3.10.1 of \cite{Mo} and will be applied to the Lie algebras $\frak g$ of the automorphism groups of nonhomogeneous smooth horospherical varieties of Picard number one.

\begin{theorem}\label{connection} Let $\frak m$ be a fundamental graded Lie algebra. Let $G_0$ be a connected Lie subgroup of $G_0(\frak m)$ and let $\frak g_0$ be the Lie algebra of $G_0$. Let $\frak g(\frak m, \frak g_0)=\bigoplus_{p\in \mathbb Z}\frak g_p$ be the prolongation of $(\frak m, \frak g_0)$ and $\frak h(\frak m, \frak g_0)=\bigoplus_{p \geq 0} \frak g_p$ be its non-negative part. Let $H:=H(\frak m, G_0)$ be a Lie group given as {\rm (\ref{H})} with its Lie algebra $\frak h(\frak m, \frak g_0)$.

Assume that the prolongation $\frak g(\frak m, \frak g_0)$ is finite-dimensional. We also assume that there exist a graded Lie algebra $\tilde{\frak g}$ that contains $\frak g(\frak m, \frak g_0)$ as a Lie subalgebra, an $\ad(\frak g(\frak m, \frak g_0))$-invariant symmetric bilinear form $(.,.)$ on $\tilde{\frak g}$, and a map $\tau \colon \tilde{\frak g} \rightarrow \tilde{\frak g}$ satisfying
    \begin{enumerate}
      \item[\rm(1)] $\{.,.\}:=-(., \tau.)$ is a positive definite Hermitian inner product on $\frak g(\frak m, \frak g_0)$ and $\{\frak g_p,\frak g_q \}=0$ if $p \neq q$;
      \item[\rm(2)] $\frak g_{p} \subset \tilde{\frak g}_{p}$ for any integer $p$ and $\tau(\frak g_p)\subset \tilde{\frak g}_{-p}$ for any integer $p \geq 0$;
      \item[\rm(3)] $\{[A,z_1],z_2\}=-\{z_1,[\tau(A),z_2]\}$ for $A \in \frak g$, $z_1, z_2\in \tilde{\frak g}$;
      \item[\rm(4)] there exists $\tau_0 \colon G_0 \rightarrow \tilde{G_0}$ such that and $\{az_1,z_2\}=-\{z_1, \tau_0(a)z_2\}$ for $z_1, z_2\in \tilde{\frak g}$ and $a\in G_0$, where $\tilde{G_0}$ is a closed subgroup of $G_0(\tilde {\frak m}):=\Aut(\bigoplus_{p<0}\tilde{\frak g}_{p})$ with its Lie algebra $\tilde{\frak g}_0$.
    \end{enumerate}
Then, for each $G_0$-structure on $(M,F)$ of type $\frak m$, we can construct a Cartan connection $(P, \omega)$ of type $(\frak g(\frak m, \frak g_0), H)$ so that two $G_0$-structures on $(M,F)$ are (locally) equivalent when the associated Cartan connections of type $(\frak g(\frak m, \frak g_0), H)$ are (locally) equivalent. \end{theorem}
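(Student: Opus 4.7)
The plan is to verify the two hypotheses of Theorem~\ref{C.cartan connection} by a Hodge-theoretic construction of $W$, closely following Morimoto's proof of Proposition~3.10.1 of~\cite{Mo} (reproduced as Theorem~\ref{Doubrov}) but accommodating the fact that $\tau$ now takes values in the larger algebra $\tilde{\frak g}$ rather than in $\frak g(\frak m,\frak g_{0})$ itself. In particular I would not try to reprove Theorem~\ref{Doubrov} from scratch; instead I would re-examine each step of Morimoto's argument and check that it survives the weakening of the hypotheses.

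By condition~(1), the sesquilinear form $\{\cdot,\cdot\}=-(\cdot,\tau\cdot)$ restricts to a positive definite Hermitian form on $\frak m$ and, after tensoring with the induced Hermitian structure on $\wedge^{q}\frak m^{\ast}$, to a positive definite Hermitian form on $C^{q}(\frak m,\frak g(\frak m,\frak g_{0}))$; the orthogonality of the grading in~(1) ensures compatibility with the bigradation on the cochain complex. Defining $W$ to be the orthogonal complement of $\partial F^{1}\Hom(\frak m,\frak g(\frak m,\frak g_{0}))$ inside $F^{1}\Hom(\wedge^{2}\frak m,\frak g(\frak m,\frak g_{0}))$ with respect to this positive definite form yields condition~(1) of Theorem~\ref{C.cartan connection} automatically. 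For condition~(2), I would extend $\{\cdot,\cdot\}$ and the formal adjoint $\partial^{\ast}$ of $\partial$ to $C^{\bullet}(\frak m,\tilde{\frak g})$ using a $\{\cdot,\cdot\}$-orthonormal basis of $\frak m$ together with the operator $\ad\circ\tau$. Conditions~(3)--(4) then translate into the statement that, for $A\in\frak g_{p}$ with $p\geq 0$ (resp.\ $a\in G_{0}$), the operator $\ad(A)$ (resp.\ the $G_{0}$-action) on $C(\frak m,\tilde{\frak g})$ has Hermitian adjoint $-\ad(\tau A)$ (resp.\ $-\tau_{0}(a)$). Consequently the Laplacian $\Box=\partial\partial^{\ast}+\partial^{\ast}\partial$ commutes with the induced $H$-action, so its kernel is $H$-stable; restricting this kernel to $F^{1}\Hom(\wedge^{2}\frak m,\frak g(\frak m,\frak g_{0}))$ and invoking positive definiteness to identify it with $W$ yields $H$-stability. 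Theorem~\ref{C.cartan connection} then produces the desired Cartan connection and the local equivalence statement.

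The main obstacle is the very last identification: since $\tau$ lands in $\tilde{\frak g}$, the adjoint operator $\partial^{\ast}$ naturally lives on $C(\frak m,\tilde{\frak g})$ and need not preserve the subspace of $\frak g(\frak m,\frak g_{0})$-valued cochains. What must be checked is that the restriction of the Hodge decomposition from $\tilde{\frak g}$-valued cochains to $\frak g(\frak m,\frak g_{0})$-valued cochains still yields an $H$-stable complement of $\partial F^{1}\Hom(\frak m,\frak g(\frak m,\frak g_{0}))$. This hinges on the observation that $\ad(H)$ preserves the subspace $C(\frak m,\frak g(\frak m,\frak g_{0}))\subset C(\frak m,\tilde{\frak g})$ as well as the image of $\partial$ restricted to $\frak g(\frak m,\frak g_{0})$-valued cochains, together with the fact that the $\{\cdot,\cdot\}$-orthogonal projection onto $C(\frak m,\frak g(\frak m,\frak g_{0}))$ is $H$-equivariant; the latter is precisely the content of the $\ad(\frak g(\frak m,\frak g_{0}))$-invariance of $(\cdot,\cdot)$ on $\tilde{\frak g}$ combined with conditions~(3)--(4). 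Once this compatibility is in hand, the Morimoto construction carries through verbatim and produces the functor from $G_{0}$-structures of type $\frak m$ to Cartan connections of type $(\frak g(\frak m,\frak g_{0}),H)$.
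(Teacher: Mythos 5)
Your overall architecture is the same as the paper's: reduce to Theorem~\ref{C.cartan connection}, take $W$ to be the $\{\cdot,\cdot\}$-orthogonal complement of $\partial F^1\Hom(\frak m,\frak g(\frak m,\frak g_0))$ (equivalently $\Ker\partial^*\cap F^1\Hom(\wedge^2\frak m,\frak g(\frak m,\frak g_0))$), and obtain $H$-stability from the adjointness encoded in conditions (3)--(4), passing through $\tilde{\frak g}$-valued cochains to make sense of $\ad(\tau A)$. You also correctly locate the one step that goes beyond Morimoto. But your proposed resolution of that step has a genuine gap: you assert that the $\{\cdot,\cdot\}$-orthogonal projection of $C(\frak m,\tilde{\frak g})$ onto $C(\frak m,\frak g(\frak m,\frak g_0))$ is $H$-equivariant and that this is ``precisely the content'' of the invariance of $(\cdot,\cdot)$ together with (3)--(4). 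Equivariance of that projection amounts to $\ad(\frak h)$-stability of $\frak g^\perp=\{z\in\tilde{\frak g}\mid \{z,\frak g\}=0\}$, and this does not follow from the hypotheses: for $z\in\frak g^\perp$, $A\in\frak h$, $y\in\frak g$, condition (3) gives $\{[A,z],y\}=-\{z,[\tau(A),y]\}$, and since $\tau(A)\notin\frak g$ the bracket $[\tau(A),y]$ leaves $\frak g$, so $z$ annihilating $\frak g$ yields nothing. Already in the motivating example $\tilde{\frak g}=\frak{gl}(V)$, $\tau(z)=-z^*$, the orthocomplement of $\frak g$ is stable under $\ad(A^*)$, not under $\ad(A)$, so the projection is not $H$-equivariant in general. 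A secondary problem with the Laplacian formulation is that $\{\cdot,\cdot\}$ is only assumed positive definite on $\frak g(\frak m,\frak g_0)$, so a Hodge decomposition of all of $C(\frak m,\tilde{\frak g})$ (and a genuine adjoint $\partial^*$ there) is not available; the form may degenerate off $\frak g$.

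The paper avoids both issues by never asking the projection to be equivariant and never doing Hodge theory on $\tilde{\frak g}$-valued cochains. It defines $\partial^*$ only on $\Hom(\wedge^{\centerdot}\frak m,\frak g(\frak m,\frak g_0))$, where the form is positive definite, and introduces the projected action $\lambda(\tau A)=(id\otimes\pi)\circ(\ad\otimes\ad)(\tau A)$ with $\pi\colon\frak g\oplus\frak g^\perp\to\frak g$. The key computation is the weaker, sufficient identity (\ref{c2}), namely $\partial\circ\lambda(\tau A)=\lambda(\tau A)\circ\partial$ on $\frak g$-valued cochains, proved by pairing against arbitrary $\frak g$-valued cochains: one uses the standard commutation (\ref{c1}) of $\tilde\partial$ with the $\tilde{\frak m}$-action (valid since $\tau(F^1\frak h)\subset\tilde{\frak m}$ by (2)) and the fact that $\frak g$-valued cochains pair to zero with $\frak g^\perp$-valued ones, so the unprojected and projected actions are indistinguishable under such pairings. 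From (\ref{c2}) and condition (3) one gets $\partial^*\circ\rho_*(A)=\rho_*(A)\circ\partial^*$ for $A\in F^1\frak h$, and similarly $\partial^*$ commutes with $\rho(a_0)$ for $a_0\in G_0$ via $\tau_0$; hence $\Ker\partial^*$ is $H$-stable and $W=\Ker\partial^*\cap F^1\Hom(\wedge^2\frak m,\frak g(\frak m,\frak g_0))$ satisfies Theorem~\ref{C.cartan connection}. To repair your argument, replace the projection-equivariance claim by this inner-product verification of (\ref{c2}) (or an equivalent statement); the rest of your outline then goes through.
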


\begin{proof}
We simplify $\frak g=\frak g (\frak m, \frak g_0)$, $\frak h=\frak h (\frak m, \frak g_0)$. By Theorem \ref{C.cartan connection}, it is sufficient to show that there exists a subspace $W$ of $F^1\Hom(\wedge^2 \frak m, \frak g)$ such that
\begin{enumerate}
  \item $F^1\Hom(\wedge^2 \frak m, \frak g)=W \oplus \partial F^1 \Hom(\frak m, \frak g)$,
  \item $W \text{ is stable under the action of } H$.
\end{enumerate}

Let $\frak m'$ be the dual of $\frak m$. We extend the bilinear form $(\cdot,\cdot)$ to $\wedge^\centerdot \tilde{\frak g}$. We identify $\wedge^\centerdot \frak m'$ and $\wedge^\centerdot \tau (\frak m)$ by defining a map $\eta\colon \wedge^\centerdot \frak m'\rightarrow \wedge^\centerdot \tau (\frak m)\subset \wedge^\centerdot \tilde{\frak g}$ as the inverse of the isomorphism
\begin{eqnarray*}
\eta'\colon \wedge^\centerdot \tau (\frak m) &\rightarrow& \wedge^\centerdot \frak m' \\
         a &\mapsto& \eta'(a): z \mapsto (a,z) \text{ for } z \in \wedge^\centerdot \frak m.
\end{eqnarray*}
Then, $(\eta(f),z)=f(z)$ for $f\in \wedge^\centerdot \frak m'$ and $z\in \wedge^\centerdot \frak m$.

We also extend the bilinear form $(\cdot,\cdot)$ to $\Hom(\wedge^{\cdot}\tilde{\frak m}, \tilde{\frak g})$ and define $\{.,.\}:=-(., \tau.)$. By assumption (1), the extended Hermitian inner product $\{.,.\}$ is positive definite on $\Hom(\wedge^{\centerdot} \frak m, \frak g) \cong \wedge^\centerdot \tau (\frak m)\otimes \frak g$. Let $\partial^*$ be the formal adjoint of $\partial$ with respect to the extended Hermitian inner product $\{.,.\}$ on $\Hom(\wedge^{\centerdot}\frak m, \frak g)$.

Then, we have the direct sum decomposition
\begin{eqnarray*} \Hom(\wedge^q \frak m, \frak g)=\partial \Hom(\wedge^{q-1} \frak m, \frak g)\oplus \Ker \partial^*.\end{eqnarray*}

Let us show that $\Ker\partial^*$ is an invariant subspace for the action of $H$.
Let $\rho$ be the representation of $H\subset G$ on $\Hom(\wedge^{\centerdot}\frak m, \frak g)$ and $\rho_*$ be the corresponding adjoint representation of $\frak h\subset \frak g$ on $\Hom(\wedge^{\centerdot}\frak m, \frak g)$. Since any element $a\in H$ is written as
\begin{eqnarray*} a=a_0 \cdot \exp(A)
\end{eqnarray*}
with $a_0\in G_0$, $A\in F^1\frak h:= \bigoplus_{p>0}\frak g_p$,
it suffices to show that
\begin{eqnarray*}
&\mbox{(a)}& \partial^* \circ \rho (a_0)=\rho (a_0) \circ \partial^*  \text{ for } a_0\in G_0 \\
&\mbox{(b)}& \partial^* \circ \rho_*(A)=\rho_* (A) \circ \partial^* \text{ for } A \in F^1\frak h.
\end{eqnarray*}

Let $\tilde\partial$ be the coboundary operator on $\Hom(\wedge^{\cdot}\tilde{\frak m}, \tilde{\frak g})$. Let $\tilde\rho$ be the representation of $\tilde{G_0}$ on $\Hom(\wedge^{\cdot}\tilde{\frak m}, \tilde{\frak g})$ and $\tilde{\lambda}$ be the adjoint representation of $\tilde{\frak m} = \bigoplus_{p < 0}\tilde{\frak g}_p$ on $\Hom(\wedge^{\cdot}\tilde{\frak m}, \tilde{\frak g})$.

In general, we have
\begin{eqnarray*}
\tilde \partial \circ \tilde\rho (b_0)=\tilde \rho (b_0) \circ \tilde \partial  \text{ for } b_0 \in \tilde{G_0},
\end{eqnarray*}
and
\begin{eqnarray}\label{c1}
\tilde\partial \circ \tilde\lambda(B)=\tilde \lambda (B) \circ \tilde\partial \text{ for } B\in \tilde{\frak m}.\end{eqnarray}

We denote $\tilde{\frak g}=\frak g \oplus \frak g^\perp$, where $\frak g^\perp = \{z \in \tilde{\frak g} | \{z, y\}=0 \text{ for all } y \in \frak g \}$. Let $\lambda$ be the representation of $\tau(F^1\frak h)$ on $\Hom(\wedge^{\centerdot} \frak m, \frak g) \cong \wedge^\centerdot \tau (\frak m)\otimes \frak g$, which is defined by a composition of the adjoint representation of $\tau(F^1\frak h)$ on $\wedge^{\centerdot}\tau(\frak m)\otimes \tilde{\frak g}$ and the projection $\pi \colon \frak g \oplus \frak g^\perp \rightarrow \frak g$:
\begin{eqnarray*} \lambda=(id \otimes \pi)\circ (\ad \otimes \ad)\colon \tau(F^1\frak h) \rightarrow \End(\wedge^{\centerdot}\tau(\frak m)\otimes {\frak g}).\end{eqnarray*}

It follows that for $A\in \frak h$ and $\phi, \psi\in \Hom(\wedge^{\centerdot}\frak m, \frak g)$,
\begin{eqnarray*}
\{ \phi, \tilde\partial \tilde\lambda(\tau A)\psi \}&=&\{ \phi, \partial \tilde\lambda(\tau A)\psi \} \\
                                                    &=&\{\partial^* \phi, \tilde\lambda(\tau A)\psi \} \\
                                                    &=&\{ \partial^* \phi, \lambda(\tau A)\psi \} \\
                                                    &=&\{\phi, \partial \lambda(\tau A)\psi \}
\end{eqnarray*}
and hence,
\begin{eqnarray*}
\{\phi, \partial \lambda(\tau A)\psi \}
&=&\{\phi, \tilde\partial \tilde\lambda(\tau A)\psi \}\\
&=&\{\phi, \tilde\lambda(\tau A)\tilde\partial \psi \}\text{ because (\ref{c1}) and $\tau(F^1\frak h) \subset \tilde{\frak m}$ by assumption (2)} \\
&=&\{\phi, \tilde\lambda(\tau A)\partial \psi \}\\
&=&\{\phi, \lambda(\tau A)\partial \psi \}.
\end{eqnarray*}
Thus,
\begin{eqnarray}\label{c2}
\partial \circ \lambda(B)= \lambda (B) \circ \partial \text{ for } B\in \tau(F^1\frak h).
\end{eqnarray}

Hence,
\begin{eqnarray*}
\{\partial^* \circ \rho_*(A) \phi, \psi \}
&=&\{\rho_*(A) \phi, \partial \psi \} \\
&=&-\{\phi, \lambda(\tau A)\partial \psi \} \\
&=&-\{\phi, \partial \lambda(\tau A)\psi \} \text{ from (\ref{c2}) } \\
&=&-\{\partial^* \phi, \lambda(\tau A)\psi \}\\
&=&\{\rho_*(A)\partial^*\phi, \psi\},
\end{eqnarray*}
which gives (b).

Similarly, we can verify (a).

If we set $W=\Ker\partial^*\cap F^1\Hom(\wedge^2\frak m, \frak g)$, the proof is completed by Theorem \ref{C.cartan connection}.
\end{proof}

\section{Lie algebras of the automorphism groups of horospherical varieties}\label{section2}

Horospherical varieties are complex normal algebraic varieties where a connected reductive algebraic group $L$ acts with an open orbit isomorphic to a torus bundle over a flag variety (\cite{Pa}).

\begin{theorem}[Theorem 0.1 and Theorem 1.11 of \cite{Pa}]\label{Horospheical}
 Let $X$ be a smooth nonhomogeneous projective horospherical $L$-variety with Picard number one. Then, the automorphism group of $X$ is a connected non-reductive linear algebraic group $G$, acting with exactly two orbits. Moreover, $X$ is uniquely determined by its two closed $L$-orbits $Y$ and $Z$, which are isomorphic to $L/P_\alpha$ and $L/P_\beta$, respectively. Let $\pi_i$ be the $i$-th fundamental weight of $L$-representation space. The variety $X=(L, \alpha, \beta)$ is one of the triples, with the group $G$, of the following list.
 \begin{enumerate}
   \item[\rm(1)] $(B_m,{\alpha_{m-1}},{\alpha_{m}})$ for $m\geq3$ and $(\SO(2m+1)\times \mathbb C^*)\ltimes V(\pi_m)$
   \item[\rm(2)] $(B_3,{\alpha_{1}},{\alpha_{3}})$ and $(\SO(7)\times \mathbb C^*)\ltimes V(\pi_3)$
   \item[\rm(3)] $(C_m,{\alpha_{i}},{\alpha_{i+1}})$ for $m\geq2$, $i\in \{1,\ldots, m-1\}$ and $((\Sp(2m)\times \mathbb C^*)/\{\pm1\})\ltimes V(\pi_1)$
   \item[\rm(4)] $(F_4,{\alpha_{2}},{\alpha_3})$ where $\alpha_{2}$ is a long root and $(F_4\times \mathbb C^*)\ltimes V(\pi_4)$
   \item[\rm(5)] $(G_2,{\alpha_{2}},{\alpha_1})$ and $(G_2\times \mathbb C^*)\ltimes V(\pi_1)$
 \end{enumerate}
  Here, $P_{\alpha_i}$ is the maximal parabolic subgroup of $L$ associated with the simple root $\alpha_i$, and $V(\pi_i)$ is the irreducible $L$-representation with the highest weight $\pi_i$.
\end{theorem}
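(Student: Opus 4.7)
The plan is to follow the Luna--Vust classification of spherical embeddings, specialized to horospherical varieties. Recall that a horospherical $L$-variety contains an open $L$-orbit of the form $L/H$, where $H$ contains a maximal unipotent subgroup of $L$; equivalently, $N_L(H) = P$ is a parabolic subgroup and $L/H \to L/P$ is a torus bundle with fiber $T := P/H$. Projective horospherical embeddings of $L/H$ are classified combinatorially by complete colored fans in the rational cocharacter space of $T$, with colors indexed by the simple roots that do not vanish on $T$. The first step is to translate each hypothesis (smooth, projective, Picard number one, nonhomogeneous with two closed $L$-orbits) into a constraint on the fan.

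Picard number one forces the fan to be essentially two maximal cones sharing a codimension-one face, so the two closed $L$-orbits must be flag varieties $L/P_\alpha$ and $L/P_\beta$ for maximal parabolics. Smoothness then requires the rays of the fan to be primitive with respect to the lattice, and imposes compatibility between the two colors and the torus fiber. The second, and most laborious, step is the combinatorial enumeration: run through all simple types and, for each candidate pair of simple roots $\alpha, \beta$, check whether the torus $T$ can be realized inside both $P_\alpha$ and $P_\beta$ so that the resulting fan is smooth. All but the five families in the statement are eliminated in this way, and each surviving triple $(L,\alpha,\beta)$ is matched to a concrete smooth projective two-orbit variety.

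The final step identifies $G = \Aut(X)$. One first exhibits the obvious action of $L \times \mathbb C^*$ (with the scalar acting on the torus fiber), and then produces an explicit unipotent radical $V(\pi_i)$: geometrically, $X$ has a contraction onto one of its closed orbits realizing it as the projectivization of an $L$-equivariant rank-two bundle, whose fiber carries a translation action by the irreducible representation $V(\pi_i)$. Maximality of the described $G$ follows from rigidity of two-orbit completions: any automorphism must permute the closed orbits and preserve their $L$-structure, and hence lies in the stated semidirect product. The hardest parts are (i) the case-by-case enumeration in step two, which requires delicate lattice and representation-theoretic bookkeeping to verify smoothness in each candidate, and (ii) ruling out exotic automorphisms in step three, which depends on a detailed analysis of global vector fields tangent to the two closed orbits.
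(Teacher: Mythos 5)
This theorem is not proved in the paper at all: it is quoted wholesale from Pasquier (Theorem 0.1 and Theorem 1.11 of \cite{Pa}), so the only comparison available is with the cited source. Your outline does follow the general strategy of that source (horospherical embeddings of $L/H$ with $H\supset$ a maximal unipotent subgroup, combinatorial classification, case-by-case enumeration, then computation of $\Aut(X)$), but as written it is a plan rather than a proof: the reduction from ``Picard number one'' to ``rank one torus and two maximal cones'' requires the Picard group description for horospherical varieties, smoothness of a horospherical embedding is not mere primitivity of rays (it involves the colors and the local structure theorem, and this is precisely what cuts the list down to the five triples), and the enumeration and the maximality of the automorphism group---which you yourself identify as the hard parts---are exactly where all the content lies and are not carried out.

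More importantly, your step three contains a concrete error. You assert that $X$ itself admits a contraction onto one of its closed orbits realizing it as the projectivization of an $L$-equivariant rank-two bundle. This is impossible: $X$ has Picard number one, so it carries no nontrivial projective-bundle structure over a positive-dimensional base. The correct mechanism (Lemmas 1.15 and 1.17 of \cite{Pa}, recalled in Section \ref{section2} of this paper) is that only the closed orbit $Z$ is stable under $\Aut(X)$, that $\Aut(X)=\Aut(\tilde X)$ for $\tilde X$ the blow-up of $X$ along $Z$, and that $\tilde X$ is a projective bundle over $Y\cong L/P_\alpha$ whose fibers are projective spaces of dimension $\dim U_-\geq 2$ (see Lemma \ref{gradation of X}), not lines; the unipotent radical $V(\pi_i)$ acts by translations on these fibers and maximality is checked there. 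Relatedly, your claim that ``any automorphism must permute the closed orbits'' cannot be the basis for ruling out exotic automorphisms: the theorem itself says $G=\Aut(X)$ has exactly two orbits, so $Y$ is absorbed into the open $G$-orbit and is not preserved; only $Z$ is. An argument for maximality built on preservation of both closed $L$-orbits would therefore fail, and the actual proof instead analyzes global vector fields/automorphisms on the projective bundle $\tilde X$.
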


 For a given $X=(L, \alpha, \beta)$, there are irreducible $L$-representations $V(\pi_\alpha)$ and $V(\pi_\beta)$, and the highest weight vectors $v_{\alpha}$ of $V(\pi_\alpha)$ and $v_{\beta}$ of $V(\pi_\beta)$ such that $X$ is the orbit closure of $L.[v_{\alpha}+v_{\beta}]\subset \mathbb P (V(\pi_{\alpha})+V(\pi_{\beta}))$ (Section 1.3 of \cite{Pa}). Hence, $X$ has three orbits under the action of $L$: one open orbit isomorphic to a torus bundle over $L/(P_\alpha\cap P_\beta)$, and two closed orbits $Y$ and $Z$ that are isomorphic to $L/P_\alpha$ and $L/P_\beta$, respectively.

 Let $G$ be the automorphism group of $X$. According to Lemma 1.15 of \cite{Pa}, the closed $L$-orbit $Z$ is stable under the $G$-action. Let $\tilde{X}$ be the blowup of $X$ along $Z$. Then, $G=\Aut\tilde{X}$. According to the proof of Lemma 1.17 of \cite{Pa}, $\tilde{X}$ is a projective bundle over the $L$-orbit $Y$ and $U\subset G$ acts on $\tilde{X}$ by translation on the fibers of $\tilde{X}\rightarrow Y \cong L/P_\alpha$. Further, $G=(L\times \mathbb C^*)/C\ltimes U$, where $U$ is an $L$-representation space and $C$ is the centralizer.

\begin{proposition}\label{imbedding}
Let $X=(L, \alpha, \beta)$ be a smooth nonhomogeneous projective horospherical variety of Picard number one. Let $\frak g$ be the Lie algebra of the automorphism group of $X$. Then,
\begin{enumerate}
  \item[\rm(1)] the Lie algebra $\frak g$ is a semidirect product of $(\frak l+\mathbb C)$ and an irreducible $\frak l$-representation $U$, where $\frak l$ is a semisimple Lie algebra, i.e., $\frak g=(\frak l+\mathbb C) \rhd U$;
  \item[\rm(2)] there exist two irreducible $L$-representations $V_{\alpha}$ and $V_{\beta}$ such that $\frak l\subset \End(V_{\alpha})$, $\frak l\subset \End(V_{\beta})$,  $\mathbb C \simeq \mathbb C I\subset \End(V_{\beta})$, and $U\subset \End(V_{\alpha},V_{\beta})$. Hence, we regard $\frak g$ as a Lie subalgebra via the inclusion $i \colon \frak g \hookrightarrow  \frak{gl}(V)=\End V$ where $V=V_{\alpha}\oplus V_{\beta}$. In particular, we can write an element $\bf{Z}$ of $\frak g$ as
   \begin{eqnarray*}
   \mathbf{Z}=\left(\begin {array}{cc}
   l & 0 \\
   u & l+ c
   \end{array} \right)\in \End(V)=\frak{gl}(V)
   \end{eqnarray*}
   where $l\in \frak l$, $u\in U$, and $c\in \mathbb C I$.
 \end{enumerate}

Let $*$ be the operator on $\frak{gl}(V)$ given by $z^*=\bar{z} ^{t}$ for $z\in \frak{gl}(V)$. Let $\tau$ be an operator defined by $\tau(z)=-z^*$ for $z\in \frak {gl}(V)$. Let $(.,.)$ be the Cartan-Killing form on $\frak{gl}(V)$.
\begin{enumerate}
  \item[\rm(3)] We define an inner product $\{\cdot, \cdot \}$ by $\{z_1,z_2\}=(z_1,z_2^*)=-(z_1, \tau(z_2))$ for $z_1, z_2\in\frak{gl}(V)$. Then, a restricted inner product $\{\cdot, \cdot \}$ is a positive definite Hermitian inner product on $\frak g$.
\end{enumerate}
\end{proposition}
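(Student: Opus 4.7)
My plan is to deduce (1) and (2) from the structural description of $G$ recalled from Pasquier just before the statement, and then to establish (3) by a direct matrix computation using the embedding produced in (2). For (1), the identification $G=(L\times\mathbb C^{*})/C\ltimes U$ given immediately before the proposition differentiates to a semidirect product $\frak g=(\frak l\oplus \mathbb C)\rhd U$ at the Lie-algebra level, where $\frak l$ is semisimple because $L$ is one of $\SO(2m+1)$, $\Sp(2m)$, $F_{4}$, or $G_{2}$ according to Theorem \ref{Horospheical}; the irreducibility of $U$ as an $\frak l$-module is read off from the same theorem, since in each case $U$ is listed as a single fundamental irreducible representation $V(\pi_{i})$.

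For (2), I would set $V_{\alpha}=V(\pi_{\alpha})$, $V_{\beta}=V(\pi_{\beta})$, and $V=V_{\alpha}\oplus V_{\beta}$, so that Pasquier's orbit-closure realisation $X=\overline{L.[v_{\alpha}+v_{\beta}]}$ takes place inside $\mathbb P(V)$, and then use the induced linear action of $G$ on $V$ to produce the embedding $i\colon \frak g\hookrightarrow \frak{gl}(V)$. Under this action $\frak l$ preserves each summand and acts on it by its given irreducible representation, so the same element $l$ sits in both diagonal blocks; the unipotent radical $U$ shifts $V_{\alpha}$ into $V_{\beta}$ (rather than the reverse) because the $G$-stable closed $L$-orbit is $Z\simeq L/P_{\beta}$, which forces the direction of the off-diagonal block. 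Finally, the one-dimensional center $\mathbb C$ acts diagonally on $V$ with two weights $(w_{\alpha},w_{\beta})$, and after absorbing a scalar multiple of the identity into a central element of $\frak l$---legitimate because we have already quotiented by the finite centralizer $C$---we may arrange that its image acts as $0$ on $V_{\alpha}$ and as $\mathbb C I$ on $V_{\beta}$. Assembling these three contributions yields the block form of the statement.

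For (3), with the embedding of (2) in hand a general element $\mathbf Z\in\frak g$ has the matrix form written in the statement. Writing $\mathbf Z^{*}$ out in block form and using that $\frak l$ acts by traceless matrices on each irreducible summand (because $\frak l$ is semisimple), one computes
\begin{equation*}
\{\mathbf Z,\mathbf Z\}=(\mathbf Z,\mathbf Z^{*})=a\operatorname{tr}(ll^{*})+b\operatorname{tr}(uu^{*})+d\operatorname{tr}(cc^{*}),
\end{equation*}
with strictly positive constants $a,b,d$ depending on $\dim V_{\alpha}$ and $\dim V_{\beta}$; each summand is non-negative and the sum vanishes exactly when $l=u=c=0$, giving positive definiteness. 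Hermitian symmetry $\{z_{1},z_{2}\}=\overline{\{z_{2},z_{1}\}}$ reduces to $\operatorname{tr}(AB)=\operatorname{tr}(BA)$ combined with $(z^{*})^{*}=z$. The main obstacle I expect is the normalisation in (2): one has to verify that after quotienting by $C$ the one-dimensional center really can be realised as $\mathbb C I\subset\End(V_{\beta})$ rather than as a diagonal scalar on all of $V$, and that $U$ really sits inside $\End(V_{\alpha},V_{\beta})$ rather than $\End(V_{\beta},V_{\alpha})$; both points are forced by the $L$-orbit structure of $X$ together with the direction of the $\frak l$-action on $U$ coming from the semidirect product in (1), after which the computation in (3) is routine.
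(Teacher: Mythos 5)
Your approach matches the paper's: (1) and (2) are taken from Pasquier's structure results together with the realization of $X$ as the orbit closure of $L.[v_{\alpha}+v_{\beta}]$ in $\mathbb P(V(\pi_{\alpha})\oplus V(\pi_{\beta}))$, and (3) is the same block-matrix computation of the $\frak{gl}(V)$ Killing form using $\frak l\subset\frak{sl}(V)$, where the paper makes your constants explicit: $\{\mathbf Z_1,\mathbf Z_2\}=2n\operatorname{Tr}(l_1l_2^{*})+2n\operatorname{Tr}(u_1u_2^{*})+2n_{\alpha}n_{\beta}\,c_1c_2^{*}$, so in particular the trace-correction term only rescales the $c$-part and positivity survives. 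One cosmetic remark: the normalization placing the $\mathbb C$-factor as $\mathbb C I\subset\End(V_{\beta})$ comes from the choice of linearization of the projective action (the lift to $\frak{gl}(V)$ is determined only up to $\mathbb C I_V$), not from absorbing a scalar into a ``central element of $\frak l$,'' which, being semisimple, has none.
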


\begin{proof}
\begin{enumerate}
\item It is from Theorem 1.11 of \cite{Pa}.
\item It is from the proof of Theorem 1.1. of \cite{Pa}. Since $X$ is the orbit closure of $L.[v_{\alpha}+v_{\beta}]\subset \mathbb P (V(\pi_{\alpha})+V(\pi_{\beta}))$, let $V_{\alpha}=V(\pi_{\alpha})$ and $V_{\beta}=V(\pi_{\beta})$.
\item If we take two elements $\mathbf{Z_1}$ and $\mathbf{Z_2}$ in $\frak g$,
  \begin{eqnarray*}
   \mathbf{Z_1}=\left(\begin {array}{cc}
   l_1 & 0 \\
   u_1& l_1+c_1
   \end{array} \right)\text{ and }
   \mathbf{Z_2}=\left(\begin {array}{cc}
   l_2 & 0 \\
   u_2 & l_2+c_2
   \end{array} \right)
  \end{eqnarray*}
where $l_1, l_2\in \frak l$, $u_1, u_2\in U$ and $c_1, c_2 \in \mathbb C$.
Then,
 \begin{eqnarray*}
 \mathbf{Z_1}\mathbf{Z_2^*}=\left(\begin {array}{cc}
   l_1 l_2^* & l_1 u_2^* \\
   u_1 l_2^* & u_1 u_2^*+l_1 c_2^*+c_1 l_2^*+c_1 c_2^*
   \end{array} \right).
 \end{eqnarray*}

From page 271 of \cite{Jo}, we see that
  \begin{eqnarray*}
 Tr \ad \mathbf{X} \ad \mathbf{Y} = 2n Tr(\mathbf{X}\mathbf{Y})- 2Tr(\mathbf{X})Tr(\mathbf{Y})\end{eqnarray*}
for $\mathbf{X}, \mathbf{Y} \in \frak{gl}(V)$.

   Since the semisimple Lie algebra $\frak l$ in $\frak{gl}(V)$ is contained in $\frak{sl}(V)$ which is the traceless subalgebra of $\frak{gl}(V)$,
  \begin{eqnarray*}
 \{\mathbf{Z_1}, \mathbf{Z_2}\}&=&2n Tr(\mathbf{Z_1}\mathbf{Z_2}^*)- 2Tr(\mathbf{Z_1})Tr(\mathbf{Z_2}^*) \\
                         &=&2n Tr(l_1 {l_2}^*)+2nTr( u_1 u_2^*)+2n_{\alpha}n_{\beta}c_1 \cdot c_2^*
 \end{eqnarray*}
 where $n=\dim(V)$, $n_{\alpha}=\dim(V_{\alpha})$, and $n_{\beta}=\dim(V_{\beta})$.
    Hence, $\{\cdot, \cdot \}$ is a positive definite Hermitian inner product on $\frak g$.
\end{enumerate}
\end{proof}

\begin{remark} We rescale the Hermitian inner product on $\frak g$ via division by $2n$ for $n=\dim(V)$(respectively, rescale the Cartan-Killing form). Thus,
 \begin{eqnarray*}
 \{\mathbf{Z_1}, \mathbf{Z_2}\}&=& Tr(l_1 {l_2}^*)+Tr( u_1 u_2^*)+\frac{n_{\alpha}n_{\beta}}{n}c_1 \cdot c_2^*.
 \end{eqnarray*}
  Then, for $E_{ij} \in V^*_{\alpha}\otimes V_{\beta}$ which is zero except $ij$-component or if we write a unit column vector $e_i$ in the $j$-th entry, we see $\{E_{ij},E_{kl}\}=Tr(E_{ij},E^*_{kl})=\delta_{jl}e_i \cdot e^*_k=\delta_{ik}\delta_{jl}$.
\end{remark}

Let $\frak l$ be a semisimple Lie algebra with $\rank(\frak l)=m$. We fix a Cartan subalgebra $\frak h$. Let $\Phi$ be a set of roots of $\frak l$ relative to $\frak h$. The root space decomposition of $\frak g$ is
 \begin{eqnarray*} \frak l =\frak h \oplus \bigoplus_{\alpha \in \Phi} \frak l_{\alpha}, \end{eqnarray*}
where $\frak l_{\alpha} =\{X \in \frak g \ | \ [ H, X ] = \alpha(H)X \text{ for all } H \in \frak h  \}$ is the root space for $\alpha \in \Phi$.

\begin{definition}
Let $\triangle=\{\alpha_1, \cdots, \alpha_m \}$ be a set of simple roots of $\frak l$ associated with the Cartan subalgebra $\frak h$. We define the \emph{characteristic element $E_{\alpha_i}$ associated with $\alpha_i\in\triangle$} as
\begin{eqnarray*}
  \alpha_j(E_{\alpha_i})=\left\{ \begin{array}{cc} &1\ \ \text{ if } \ j=i \\
                                     &0\ \ \text{ if } \ j \neq i,  \end{array} \right.
\end{eqnarray*}
Then, we can construct a gradation $\frak l=\bigoplus_{p\in \mathbb Z} \frak l_p$ which is called a \emph{gradation associated with $E_{\alpha_i}$} as follows:
\begin{eqnarray*}
&&\frak l_0 = \frak h \oplus \bigoplus_{\alpha \in \Phi_0^{+}} \frak l_{\alpha} \oplus \frak l_{-\alpha}\\
&&\frak l_k =\bigoplus_{\alpha \in \Phi_k^{+}} \frak l_{\alpha} \\
&&\frak l_{-k} =\bigoplus_{\alpha \in \Phi_k^{+}} \frak l_{-\alpha} \ (k>0),
\end{eqnarray*}
where $\Phi_k^{+}=\{ \alpha \in \Phi^{+} | \alpha(E_{\alpha_i})=k \}$.

Then, we can construct a gradation $\frak l=\bigoplus_{p\in \mathbb Z} \frak l_p$ which is called a \emph{gradation associated with $\alpha_i$}. In this
case, by Lemma 3.8 of \cite{Ya}, $\bigoplus_{p<0}\frak l_p$ is a fundamental graded Lie algebra.
\end{definition}

\begin{example}
Let $L$ be a semisimple Lie group. Let $P_{\alpha_i}$ be a maximal parabolic subgroup of $L$ associated with a simple root $\alpha_i$. The Lie algebra $\frak l$ of $L$ has a gradation $\bigoplus_{p\in \mathbb Z} \frak l_p$ associated with $\alpha_i$. Then, the tangent space of the homogeneous space $L/P_{\alpha_i}$ at each point is identified with $\bigoplus_{p<0}\frak l_p$ which is a fundamental graded Lie algebra.
\end{example}

\begin{proposition}\label{tangentsp of X}
Let $X$ be a smooth nonhomogeneous projective horospherical variety $(L, \alpha, \beta)$ of Picard number one. Let $G=\Aut(X)$ and let $\frak g=(\frak l+\mathbb C) \rhd U$ be the corresponding Lie algebra. Then, we can give a gradation of $\frak g=\bigoplus_p \frak g_p$ such that the graded Lie algebra $\frak m=\bigoplus_{p<0}\frak g_{p}$ is identified with the tangent space of $X$ at a point $x$ where $x$ is in the open $G$-orbit.

More precisely, let $\frak l_k$ and $U_k$ be eigenspaces that have eigenvalue $k$ under the action of $E_X:=E_{\alpha}$. Then,
\begin{eqnarray*}
\frak l=\bigoplus_{k=-\mu(\frak l)}^{\mu(\frak l)}\frak l_k \text{ and }  U=\bigoplus_{k=-\mu(U)}^{\mu(U)}U_k
\end{eqnarray*}
where $\mu(\frak l)$ and $\mu(U)$ are the largest numbers among the nonzero eigenvalues of the action of $E_X$ on $\frak l$ and $U$, respectively. Now give the gradation on $\frak g$ by shifting the above decompositions as follows:
 \begin{eqnarray*}
 \frak g_{-p}&=&\frak l_{-p}  \text{ for } -p<-1\\
 \frak g_{-1}&=&\frak l_{-1}+ U_{-\mu(U)} \\
 \frak g_0&=&(\frak l_0+ \mathbb C)\rhd U_{-\mu(U)+1} \\
 \frak g_p&=&\frak l_p+ U_{-\mu(U)+p+1} \text{ for } p \geq 1
 \end{eqnarray*}
such that $[\frak g_p, \frak g_q]\subset \frak g_{p+q} \text{ for } p, q\in\mathbb Z$.
\end{proposition}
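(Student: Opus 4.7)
The proposition has two claims: (a) the proposed decomposition is a Lie algebra gradation, and (b) the negative part $\frak m$ is naturally identified with $T_x X$ at a point $x$ in the open $G$-orbit.

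For (a), the characteristic element $E_X = E_\alpha$ lies in the Cartan subalgebra of $\frak l$, hence acts semisimply on $\frak l$ (yielding the standard $\alpha$-gradation $\frak l = \bigoplus_k \frak l_k$) and on the irreducible $\frak l$-module $U$ (yielding $U = \bigoplus_k U_k$ with $-\mu(U) \le k \le \mu(U)$). The brackets in $\frak g = (\frak l + \mathbb C) \rhd U$ consist of the Lie bracket of $\frak l$, the $\frak l$-action on $U$, the scalar action of $\mathbb C$ on $U$, and zero otherwise; by the Leibniz rule each respects the $E_\alpha$-eigenspace decomposition. Since the shift $U_k \mapsto k + \mu(U) - 1$ is linear in $k$, eigenvalue addition translates to degree addition, and $[\frak g_p, \frak g_q] \subset \frak g_{p+q}$ follows.

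For (b), I would take the base point $x_0 = [v_\alpha] \in Y \subset X$; this lies in the open $G$-orbit since by Lemma 1.15 of \cite{Pa} the unique closed $G$-orbit is $Z \neq Y$. Using the embedding of Proposition \ref{imbedding}, an element $(l, c, u) \in \frak l \oplus \mathbb C \oplus U$ acts on $v_\alpha \in V_\alpha$ to produce $l(v_\alpha) + u(v_\alpha) \in V_\alpha \oplus V_\beta$ (the $c$-part acts only on $V_\beta$), and proportionality to $v_\alpha$ forces $l \in \frak p_\alpha = \bigoplus_{p \ge 0} \frak l_p$ and $u(v_\alpha) = 0$, leaving $c$ unconstrained. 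Thus $\frak g_{x_0} = \frak p_\alpha \oplus \mathbb C \oplus \ker(\mathrm{ev}_{v_\alpha})$, where $\mathrm{ev}_{v_\alpha} \colon U \to V_\beta$ sends $u$ to $u(v_\alpha)$, and $T_{x_0} X \cong (\frak l/\frak p_\alpha) \oplus (U/\ker \mathrm{ev}_{v_\alpha})$.

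The main obstacle will be to prove $\ker(\mathrm{ev}_{v_\alpha}) = \bigoplus_{k \ge -\mu(U)+1} U_k$, equivalently that $\mathrm{ev}_{v_\alpha}$ vanishes on $U_k$ for $k > -\mu(U)$ and is injective on $U_{-\mu(U)}$. Since $u \in U_k$ carries the highest-weight line $\mathbb C v_\alpha$ into the weight-$(\pi_\alpha(E_\alpha) + k)$ space of $V_\beta$, this requires a careful weight-space analysis of the inclusion $U \subset \Hom(V_\alpha, V_\beta)$ using the explicit $\frak l$-modules listed in Theorem \ref{Horospheical} case by case. Geometrically this reflects the fact that the blowup $\tilde X \to Y$ is a projective bundle whose fiber at $[v_\alpha]$ is the projectivization of the image of $\mathrm{ev}_{v_\alpha}$. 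Once this identification is in hand, $T_{x_0} X \cong (\frak l/\frak p_\alpha) \oplus U_{-\mu(U)} = \frak m$, completing the proof.
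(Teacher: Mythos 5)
Your part (a) (that the shifted eigenspace decomposition is a gradation) is fine and matches the paper's quick remark: since $E_\alpha$ acts semisimply, $[U,U]=0$, and the shift on $U$ is by a fixed constant, eigenvalue additivity gives $[\frak g_p,\frak g_q]\subset\frak g_{p+q}$. Your setup for part (b) is also correct: $Y$ lies in the open $G$-orbit, the stabilizer of $x_0=[v_\alpha]$ is $\frak p_\alpha\oplus\mathbb C\oplus\ker(\mathrm{ev}_{v_\alpha})$, and $T_{x_0}X\cong(\frak l/\frak p_\alpha)\oplus(U/\ker\mathrm{ev}_{v_\alpha})$.

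But the proposal stops exactly where the real content of the proposition begins: you state that ``the main obstacle will be to prove $\ker(\mathrm{ev}_{v_\alpha})=\bigoplus_{k\geq-\mu(U)+1}U_k$'' and defer it to an unspecified case-by-case weight analysis. That identification is precisely what distinguishes $\frak m=\frak l_-\oplus U_{-\mu(U)}$ from any other candidate graded piece of $U$ of the right size, so without it the proof is not complete --- the easy part (stabilizer computation) is done, the hard part is only named. The paper closes this gap by a different but short argument: it passes to the blowup $\tilde X\to Y\cong L/P_\alpha$, where $U$ acts by translation on the fibers, observes that the $U$-directions in $T_xX$ form a sum of full pieces $U_k$ (each $U_k$ being $L_0$-irreducible) which is closed under lowering because $[\frak l_{-1},U_k]=U_{k-1}$, and then pins it down to $U_{-\mu(U)}$ by the case-by-case dimension identity $\dim X=\dim L/(P_\alpha\cap P_\beta)+1=\dim L/P_\alpha+\dim U_-$ from Lemma \ref{gradation of X}. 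You could close your own gap in the same spirit without any explicit weight computation: $\ker(\mathrm{ev}_{v_\alpha})$ is $\frak p_\alpha$-stable (since $\frak p_\alpha\cdot v_\alpha\subset\mathbb C v_\alpha$), hence a graded sum of the $L_0$-irreducibles $U_k$ closed under raising by $\frak l_1$, so it must be of the form $\bigoplus_{k\geq k_0}U_k$; the dimension count then forces $k_0=-\mu(U)+1$. As written, however, the decisive step is missing.
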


To prove Proposition \ref{tangentsp of X}, we need to calculate the eigenvalues under $E_{\alpha}$-actions.

\begin{lemma}\label{gradation of X}Let $X=(L, \alpha, \beta)$ and $E_{\alpha}$ be the characteristic element associated with root $\alpha$. The Lie algebra of automorphism $\aut(X)=(\frak l+\mathbb C) \rhd U$ has eigenspace decomposition under the action of $E_X=E_{\alpha}$ {\rm(}$\mathbb C$ has zero eigenvalue{\rm)}. Let $\frak l_k$ and $U_k$ be eigenspaces that have eigenvalue $k$.
\begin{enumerate}
\item[\rm(1)] $(B_m, \alpha_{m-1}, \alpha_{m})$, $m>2$ where $U=V(\pi_m)$. Let $E_X=E_{\alpha_{m-1}}$ then
\begin{eqnarray*}
\frak l_{-2}+\frak l_{-1}&+&\frak l_{0}+\frak l_{1}+\frak l_{2}, \\
U_{-\frac{m-1}{2}}&+&U_{-\frac{m-1}{2}+1}+\cdots+U_{\frac{m-1}{2}-1}+U_{\frac{m-1}{2}},
\end{eqnarray*}
and $\dim U_{-\frac{m-1}{2}}=2$.
\item[\rm(2)] $(B_3, \alpha_{1}, \alpha_{3})$ where $U=V(\pi_3)$; let $E_X=E_{\alpha_1}$ then
\begin{eqnarray*}
\frak l_{-1}&+&\frak l_{0}+\frak l_{1}, \\
U_{-\frac{1}{2}}&+&U_{\frac{1}{2}},
\end{eqnarray*}
and $\dim U_{-\frac{1}{2}}=4$.
\item[\rm(3)] $(C_m, \alpha_{m}, \alpha_{m-1})$ where $U=V(\pi_1)$. Let $E_X=E_{\alpha_m}$ then
\begin{eqnarray*}
\frak l_{-1}&+&\frak l_{0}+\frak l_{1}, \\
U_{-\frac{1}{2}}&+&U_{\frac{1}{2}},
\end{eqnarray*}
and $\dim U_{-\frac{1}{2}}=m$.
\item[\rm(4)] $(C_m, \alpha_{i+1}, \alpha_{i})$, $m>2$, $i=1,\ldots, m-2$ where $U=V(\pi_1)$. Let $E_X=E_{\alpha_{i+1}}$ then
\begin{eqnarray*}
\frak l_{-2}+\frak l_{-1}&+&\frak l_{0}+\frak l_{1}+\frak l_{2}, \\
U_{-1}&+&U_{0}+U_{1},
\end{eqnarray*}
and $\dim U_{-1}=i+1$.
\item[\rm(5)] $(F_{4}, \alpha_{2}, \alpha_{3})$ where $\alpha_{2}$ is a long root and $U=V(\pi_4)$. Let $E_X=E_{\alpha_2}$ then
\begin{eqnarray*}
\frak l_{-3}+\frak l_{-2}+\frak l_{-1}&+&\frak l_{0}+\frak l_{1}+\frak l_{2}+\frak l_{3}, \\
U_{-2}&+&U_{-1}+U_{0}+U_{1}+U_{2},
\end{eqnarray*}
and $\dim U_{-2}=3$.
\item[\rm(6)] $(G_{2}, \alpha_{2}, \alpha_{1})$ where $U=V(\pi_1)$. Let $E_X=E_{\alpha_2}$ then
\begin{eqnarray*}
\frak l_{-2}+\frak l_{-1}&+&\frak l_{0}+\frak l_{1}+\frak l_{2}, \\
U_{-1}&+&U_{0}+U_{1},
\end{eqnarray*}
and $\dim U_{-1}=2$.
\end{enumerate}
Furthermore, $\frak l_k$ and $U_k$ are irreducible $L_0$-representations.
\end{lemma}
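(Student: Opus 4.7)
The plan is a case-by-case verification using weight-space decompositions. In each case the grading on $\frak l$ is the standard parabolic grading associated with the simple root $\alpha$: the characteristic element $E_\alpha$ acts on a root space $\frak l_\gamma$ with eigenvalue equal to the $\alpha$-coefficient of $\gamma$, so $\mu(\frak l)$ equals the $\alpha$-coefficient of the highest root of $\frak l$. Reading this from the standard tables gives $2, 1, 1, 2, 3, 2$ in cases (1)--(6), matching the statement. Irreducibility of each nonzero $\frak l_k$ as an $\frak l_0$-module is a standard property of the gradation attached to a maximal parabolic of a simple Lie algebra.

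For the $\frak l$-module $U = V(\pi_\beta)$, every weight has the form $\nu = \pi_\beta - \sum_j n_j \alpha_j$ with $n_j \in \mathbb Z_{\geq 0}$, so $\nu(E_\alpha) = \pi_\beta(E_\alpha) - n_{i(\alpha)}$, where $i(\alpha)$ is the index of $\alpha$. Hence the $E_\alpha$-eigenvalues on $U$ form an arithmetic progression with common difference $1$. The maximum value $\pi_\beta(E_\alpha)$ equals the $(\beta,\alpha)$-entry of the inverse Cartan matrix of $\frak l$, and the minimum is its negative by symmetry of the weight diagram under the longest element of the Weyl group. Computing $\pi_\beta(E_\alpha)$ case by case yields the claimed ranges $\pm\frac{m-1}{2}$, $\pm\frac{1}{2}$, $\pm 1$, and $\pm 2$.

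To conclude one computes $\dim U_{-\mu(U)}$ and shows that each $U_k$ is irreducible under $\frak l_0$. The extremal eigenspace $U_{-\mu(U)}$ is generated as an $\frak l_0$-module by a lowest weight vector of $U$, hence is the irreducible highest weight $\frak l_0$-module whose highest weight is obtained by restricting $-w_0\pi_\beta$ to the Dynkin subdiagram of $\frak l$ with the node $\alpha$ deleted. Enumerating the weights of $V(\pi_\beta)$ that achieve the minimum $E_\alpha$-eigenvalue then gives the dimensions $2, 4, m, i+1, 3, 2$ claimed in (1)--(6). Irreducibility of each intermediate $U_k$ follows analogously: the weights in a fixed $E_\alpha$-eigenspace constitute the weight set of a single irreducible $\frak l_0$-module, whose highest weight lies on the appropriate edge of the $\pi_\beta$-weight polytope. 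The main obstacle is the exceptional cases (5) and (6), where the branching of $V(\pi_4)$ of $F_4$ under the $A_1\times A_2$-Levi and of $V(\pi_1)$ of $G_2$ under the $A_1$-Levi must be carried out explicitly by weight counting rather than read off a classical realization.
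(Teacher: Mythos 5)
Your overall method (read the eigenvalues of the characteristic element $E_\alpha$ off the weight decompositions, then count weights in the extremal eigenspace) is exactly the kind of explicit computation the paper invokes, so the strategy is fine; but there is a concrete error in how you set it up. You take $U=V(\pi_\beta)$, the fundamental representation attached to the second root $\beta$. That identification is correct only in cases (1), (2) and (6). In the two $C_m$ cases the lemma (and Pasquier's classification quoted as Theorem 2.1) gives $U=V(\pi_1)$, the $2m$-dimensional standard representation, not $V(\pi_{m-1})$ resp.\ $V(\pi_i)$; and in the $F_4$ case $U=V(\pi_4)$ (dimension $26$), not $V(\pi_3)$ (dimension $273$). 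With your identification the recipe ``maximal eigenvalue $=\pi_\beta(E_\alpha)$'' does \emph{not} yield the claimed ranges: for $(C_m,\alpha_m,\alpha_{m-1})$ it gives $\pm\frac{m-1}{2}$ instead of $\pm\frac12$, for $(C_m,\alpha_{i+1},\alpha_i)$ it gives $\pm i$ instead of $\pm1$, and for $F_4$ it gives $\pm4$ instead of $\pm2$, with the wrong extremal dimensions as well. So the assertion that ``computing $\pi_\beta(E_\alpha)$ case by case yields the claimed ranges'' is internally inconsistent in half the cases; the proof only goes through if you replace $\pi_\beta$ by the actual highest weight of $U$ as listed in the statement ($\pi_m,\pi_3,\pi_1,\pi_1,\pi_4,\pi_1$) and redo the weight count (e.g.\ for $C_m$ the weights $\pm\epsilon_j$ of $V(\pi_1)$ have $E_{\alpha_{i+1}}$-eigenvalue $\pm1$ for $j\le i+1$ and $0$ otherwise, giving $\dim U_{-1}=i+1$; for $F_4$ the weights of $V(\pi_4)$ are the short roots together with $0$ of multiplicity two, giving eigenspaces of dimensions $3,6,8,6,3$).

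Two smaller points. The symmetry ``minimum $=-$maximum'' should be justified by the fact that $-w_0=\mathrm{id}$ for types $B_m$, $C_m$, $F_4$, $G_2$ (all representations self-dual), which you use implicitly. And the final claim that every intermediate $U_k$ is an irreducible $\frak l_0$-module is asserted rather than proved (``the weights in a fixed eigenspace constitute the weight set of a single irreducible module''); this is precisely what must be verified case by case, as you concede for $F_4$ and $G_2$, so it should be carried out with the correct modules above. Modulo these corrections your argument coincides in substance with the paper's proof, which simply refers to explicit basis computations in the cited references.
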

\begin{proof}It is calculated with basis elements from \cite{Wa} or \cite{OV}.\end{proof}

 Let $\frak l_{-}=\bigoplus_{p<0} \frak l_p$ and $U_{-}=U_{-\mu(U)}$. Then, $\frak m=\frak l_{-}+U_{-}$.

\begin{proof}[Proof of Proposition \ref{tangentsp of X}]
 Let $\tilde{X}$ be the blowup of $X$ along $Z$. Since the open $G$-orbit of $X$ is isomorphic to the open $G$-orbit of $\tilde X$, it is sufficient to show that $T_{x}\tilde X$ is identified with $\frak m=\frak l_{-}+U_{-}$ for any $x$ that is in the open $G$-orbit of $\tilde X$.

 According to Theorem \ref{Horospheical} and its proof, $G=(L\times \mathbb C^*)/C\ltimes U$, where $U$ is a $L$-representation space and $C$ is the centralizer. $\tilde{X}$ is a projective bundle over the $L$-orbit $Y\cong L/P_{\alpha}$ such that $U$ acts by translation on the fibers, where $P_{\alpha}$ is the parabolic subgroup of $L$ associated with the root $\alpha$. For any point $x \in Y$, the tangent directions of the $L$-action at $x$ are naturally identified with $\frak l_{-}\cong T_x Y$, and the other tangent directions are contained in $U$.

 Hence, we assume that $x$ is in $Y$ which contained in the open $G$-orbit of $\tilde X$ and choose the characteristic element $E_{\alpha}$ of $\frak l$ associated with the root $\alpha$ as $E_X$: the {\it grading element of $\frak g$}. Let $\frak l_k$ and $U_k$ be eigenspaces that have eigenvalue $k$ under the action of $E_X$. Then, according to Lemma \ref{gradation of X}, we see
\begin{eqnarray*}
\frak l=\bigoplus_{k=-\mu(\frak l)}^{\mu(\frak l)}\frak l_k \text{ and }  U=\bigoplus_{k=-\mu(U)}^{\mu(U)}U_k
\end{eqnarray*}
where $\mu(\frak l)$ and $\mu(U)$ are the largest numbers among the nonzero eigenvalues of the action of $E_X$ on $\frak l$ and on $U$, respectively.

Since $U_k$ is an irreducible $L_0$-module and we see that $[\frak l_{-1},U_k]=U_{k-1}$, if the tangent space of $X$ at $x$ contains $U_k$, it must contain $U_{k-1}$. We can easily check that the dimension $\dim X=\dim L/(P_{\alpha}\cap P_{\beta})+1$ equals $\dim L/P_{\alpha}+\dim U_{-}$ in all cases. Hence, if one give the gradation on $\frak g$ by shifting based on $\frak g_{-1}=\frak l_{-1}+ U_{-\mu(U)}$, then the tangent space $T_{x}X$ at $x$ is identified with $\frak m=\frak l_{-}+U_{-}$. Since the gradation is given by $E_X$, it is clear that $[\frak g_p, \frak g_q]\subset \frak g_{p+q} \text{ for } p, q\in\mathbb Z$.

\end{proof}

Let $\frak l_{> 0}=\frak l_{1}+ \cdots+\frak l_{\mu(\frak l)}$ and $\frak l_{\geq 0}=\frak l_{0}+\frak l_{> 0}$.
Let $U_{\tilde i}=:U_{-\mu(U)+1+i}$ for $i=-1,\cdots,l$ where $l= 2\mu(U)-1$. For example, $U_{\widetilde{-1}}=U_{-\mu(U)}$ which is $U_-$, $U_{\tilde 0}=U_{-\mu(U)+1}$, and $U_{\tilde l}=U_{\mu(U)}$. Let $U_{+}=\bigoplus_{1\leq i\leq 2\mu(U)-1}U_{\tilde i}$, and $U_{\geq \tilde 0}=U_{\tilde 0}+U_{+}=\bigoplus_{0\leq i\leq 2\mu(U)-1}U_{\tilde i}$. Let $\frak g_{\geq 0}=\frak l_{\geq 0}+U_{\geq \tilde 0}+\mathbb C$.

\begin{lemma}\label{fun} Let $\frak g=\bigoplus_p \frak g_{p}$ be a graded Lie algebra given in Proposition \ref{tangentsp of X}. Then,
\begin{enumerate}
    \item[\rm(1)] $\frak m=\bigoplus_{p<0}\frak g_{p}$ is fundamental, i.e., $\frak g_p=[\frak g_{p+1}, \frak g_{-1}]$ for $p<-1$;
    \item[\rm(2)] If $z \in \frak l_{\geq 0}+U_{\geq \tilde 0}$ satisfies $[z, \frak l_{-1}]=0$, then $z=0$. Further, if $z\in \frak g_{\geq 0}$ satisfies $[z, \frak g_{-1}]=0$, then $z=0$;
    \item[\rm(3)] for any nonzero vector $u \in U_{\tilde 0}$, the dimension of the subspace $[\frak l_{-1},u] \subset U_{-}$ is greater than or equal to $2$.
\end{enumerate}
 \end{lemma}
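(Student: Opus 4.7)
The plan is to prove each statement of Lemma~\ref{fun} separately using the explicit structure $\frak g = \frak l \oplus U \oplus \mathbb C$ from Proposition~\ref{imbedding} together with the grading established in Proposition~\ref{tangentsp of X} and Lemma~\ref{gradation of X}.

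For~(1), the key observation is that $\frak g_p = \frak l_p$ for $p \leq -2$, while $\frak g_{-1} = \frak l_{-1} \oplus U_-$. Because $U$ is abelian in $\frak g$ and $[\frak l_{-k}, U_-] \subset U_{-\mu(U)-k} = 0$ for $k \geq 1$ (since $-\mu(U)$ is already the lowest weight in $U$), only the $\frak l$-parts contribute to $[\frak g_{p+1}, \frak g_{-1}]$ for $p<-1$. By Lemma~3.8 of~\cite{Ya} applied to the semisimple graded algebra $\frak l$, the negative part $\frak l_- = \bigoplus_{q<0}\frak l_q$ is fundamental, so $\frak g_p = \frak l_p = [\frak l_{p+1}, \frak l_{-1}] = [\frak g_{p+1}, \frak g_{-1}]$.

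For~(2), I would decompose $z = z_{\frak l} + z_U + z_{\mathbb C}$ according to $\frak g = \frak l + U + \mathbb C$. Since $[\frak l,\frak l]\subset\frak l$, $[\frak l,U]\subset U$, and $\mathbb C$ commutes with $\frak l$, the condition $[z, \frak l_{-1}] = 0$ decomposes into $[z_{\frak l}, \frak l_{-1}] = 0$ (in $\frak l$) and $[z_U, \frak l_{-1}] = 0$ (in $U$). For the $\frak l$-component, iterated Jacobi gives $[z_{\frak l}, \frak l_-] = 0$; for $z_{\frak l}\in\frak l_p$ with $p\ge 1$, ad-invariance of the Killing form $\kappa$ on $\frak l$ yields $\kappa(z_{\frak l}, [\frak l_{-1}, \frak l_{-p+1}]) = \kappa([z_{\frak l}, \frak l_{-1}], \frak l_{-p+1}) = 0$, so $\kappa(z_{\frak l}, \frak l_{-p}) = 0$ by fundamentality of $\frak l_-$, and nondegeneracy of $\kappa$ between $\frak l_p$ and $\frak l_{-p}$ forces $z_{\frak l}=0$; for $p=0$, faithfulness of $\frak l_0$ on the irreducible $\frak l_0$-module $\frak l_{-1}$ gives the same conclusion. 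For the $U$-component, iterated Jacobi gives $[\frak l_-, z_U] = 0$, so the $\frak l$-submodule generated by $z_U$ equals $U(\frak l_{\ge 0})\,z_U \subset U_{\ge \tilde 0}$; since $U$ is $\frak l$-irreducible and $U_- \neq 0$ does not lie in $U_{\geq \tilde 0}$, this submodule must vanish, whence $z_U = 0$. For the second assertion, the same argument applied to the $\frak l_{-1}$-part of $\frak g_{-1}$ already forces $z_{\frak l} = z_U = 0$, and the remaining condition $[z_{\mathbb C}, U_-] = 0$, combined with the fact that $\mathbb C\cong\mathbb CI$ acts on $U$ by a nonzero scalar by Proposition~\ref{imbedding}, forces $z_{\mathbb C} = 0$.

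For~(3), the approach is case-by-case through the six families of Lemma~\ref{gradation of X}. The general principle is that for nonzero $u \in U_{\tilde 0}$, the subspace $[\frak l_{-1}, u] \subset U_-$ is invariant under the stabilizer $\frak l_0^u = \{x \in \frak l_0 : [x, u] = 0\}$ (a direct Jacobi computation) and is nonzero by the irreducibility argument used in~(2); one then identifies $\frak l_{-1}$, $U_{\tilde 0}$, and $U_-$ explicitly as $\frak l_0$-modules and computes the image of the evaluation map $\frak l_{-1} \to U_-$, $v \mapsto [v, u]$. The main obstacle is the cases with $\dim U_- = 2$, namely case~(1) (the spin representation of $B_m$ under $\alpha_{m-1}$) and case~(6) ($G_2$): there one must verify full surjectivity of the evaluation map for every nonzero $u$, which requires explicit weight-vector calculations using the bases of~\cite{Wa} or~\cite{OV}.
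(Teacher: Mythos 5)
Parts (1) and (2) of your proposal are correct. Your argument for (1) is the same as the paper's: $[\frak l_{\leq -1},U_-]=0=[U_-,U_-]$, so fundamentality reduces to that of $\frak l_-=\bigoplus_{p<0}\frak l_p$. For (2) you take a somewhat different, more self-contained route: the paper kills the $\frak l_{\geq 0}$-component by citing Lemma 1.3 of \cite{Ta79} and kills the $U_{\geq \tilde 0}$-component by an adjointness computation with the Hermitian inner product $\{\cdot,\cdot\}$, using $[U_{\widetilde{k-1}},\frak l_1]=U_{\tilde k}$; you instead reprove the $\frak l$-statement via invariance and nondegeneracy of the Killing form and dispose of the $U$-component by the observation that $[\frak l_-,z_U]=0$ forces the $\frak l$-submodule generated by $z_U$ into $U_{\geq\tilde 0}\subsetneq U$, contradicting irreducibility. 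Both arguments work; yours avoids the inner product and the decomposition $[U_{\widetilde{k-1}},\frak l_1]=U_{\tilde k}$, at the cost of re-deriving the standard Tanaka lemma (your one-line appeal to faithfulness of $\frak l_0$ on $\frak l_{-1}$ for the $p=0$ case deserves a word of justification, but it is a standard fact for gradations of simple Lie algebras). The treatment of the $\mathbb C$-component in the second assertion matches the paper.

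The genuine gap is in (3). What you give there is a plan, not a proof: you state the correct general principles (nonvanishing of $[\frak l_{-1},u]$, which your irreducibility argument does deliver, and invariance of $[\frak l_{-1},u]$ under the stabilizer of $u$ in $\frak l_0$), but these only yield $\dim[\frak l_{-1},u]\geq 1$, and the stabilizer observation is never used to finish any case. The content of (3) is exactly the case-by-case verification that the image of the evaluation map $\frak l_{-1}\to U_-$, $v\mapsto [v,u]$, is at least two-dimensional for \emph{every} nonzero $u\in U_{\tilde 0}$ (not only for decomposable or highest-weight vectors), and this is what the paper's proof consists of: for each of the six triples it identifies $\frak l_{-1}$, $U_{\tilde 0}$, $U_-$ explicitly as $\frak l_0$-modules (e.g. $Q^*\otimes\Sym^2W^*$, $Q\otimes W$, $W^*$ for $(B_m,\alpha_{m-1},\alpha_m)$; a multiplication table for the spin case $(B_3,\alpha_1,\alpha_3)$; $\Sym^3W^*\times\Sym^2W\to W^*$ for $G_2$) and writes out the contraction formulas from which the bound is read off. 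Deferring these computations ``using the bases of \cite{Wa} or \cite{OV}'' leaves the key step unestablished, and note that the work is not confined to the two cases with $\dim U_-=2$: in every case one must exclude a one-dimensional image for arbitrary $u$, which only the explicit formulas (easy though they are in the larger cases) accomplish. Since (3) feeds assumption (2) of Proposition \ref{H^1(g)}, this gap has to be closed for the cohomology vanishing downstream.
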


\begin{proof}
\begin{enumerate}
\item  The gradation of $\frak l=\bigoplus_{p\in \mathbb Z} \frak l_p$ associated with $\alpha$ satisfies $\frak l_p=[\frak l_{p+1}, \frak l_{-1}]$ for $p<-1$. For $p=-2$, we see
\begin{eqnarray*}
\frak g_{-2}=\frak l_{-2} &=&[\frak l_{-1},\frak l_{-1}]\\
            &=&[\frak l_{-1}+U_-,\frak l_{-1}+U_-] \text{ because } [\frak l_{-1}+U_{-},U_{-}]=0\\
            &=&[\frak g_{-1}, \frak g_{-1}].
\end{eqnarray*}
For $p<-2$, we see
\begin{eqnarray*}
\frak g_{p}=\frak l_{p} &=&[\frak l_{p+1},\frak l_{-1}]\\
            &=&[\frak l_{p+1},\frak l_{-1}+U_-] \text{ because } [\frak l_{p+1},U_{-}]=0\\
            &=&[\frak g_{p+1}, \frak g_{-1}].
\end{eqnarray*}
Hence, the graded Lie algebra $\frak m=\bigoplus_{p<0}\frak g_{p}$ is fundamental.

\item  Assume $z\in U_{\geq \tilde 0}$ satisfies $[z, \frak l_{-1}]=0$. Since $[U_{\widetilde{k-1}}, \frak l_{1}]=U_{\tilde{k}}$ for $k \geq 0$,
  \begin{eqnarray*}
  0=\{[z, \frak l_{-1}], U_{\widetilde{k-1}} \} =\{z, [\frak l_{1}, U_{\widetilde{k-1}}] \}=\{z, U_{\widetilde{k}} \},
  \end{eqnarray*}
  which implies that $z=0$. By Lemma 1.3 of \cite{Ta79}, if $z\in \frak l _{\geq 0}$ satisfies $[z, \frak l_{-1}]=0$, then $z=0$. Hence, if $z\in \frak l _{\geq 0}+U_{\geq \tilde 0}$ satisfies $[z, \frak l_{-1}]=0$, then $z=0$.

 Since $\frak l_{-1} \subset \frak g_{-1}$, if $z\in \frak l _{\geq 0}+U_{\geq \tilde 0}$ satisfies $[z, \frak g_{-1}]=0$, then $z=0$.

 If $z \in \mathbb C$ satisfies $[z,\frak g_{-1}]=0$, then $[z,U_{-}]=z.U_{-}=0$ because $[\mathbb C,\frak l_{-1}]=0$. For $z \in \mathbb C$, if $z.U_{-}=0$, then $z=0$. Hence, if $z\in \frak g_{\geq 0}=\frak l _{\geq 0}+U_{\geq \tilde 0}+\mathbb C$ satisfies $[z, \frak g_{-1}]=0$, then $z=0$.

\item The action
\begin{eqnarray*}
\frak l_{-1} \times U_{\tilde 0} &\rightarrow & U_{-} \\
 (l,u) &\mapsto & [l,u]
\end{eqnarray*}
is described as following list up to scalar. The following list is from weights and weight diagrams(\cite{FH}) of the irreducible $\frak l_0$-representations on $\frak l_k$ and $U_k$. Let $R \omega(T)$ be the irreducible representation of type $T$ with the highest weight $\omega$. Let $*$ be the usual complex conjugation.
\begin{enumerate}

\item $(B_m, \alpha_{m-1}, \alpha_{m})$, $m>2$, where $U=V(\pi_m)$. Let $R\pi_1(A_{1})=W$ be the standard representation of $A_1$. Let $R\pi_1(A_{m-2})=Q$ be the standard representation of $A_{m-2}$. Then, $\dim W=2$, $W^*=W$, $\dim Q = m-1$ and
\begin{eqnarray*}
&\frak l_{-1}=R\pi_1(A_{m-2})^*\otimes R2\pi_1(A_{1})^*=Q^* \otimes \Sym^2W^* \\
&U_{-}=R\pi_1(A_1)^*=W^*\\
&U_{\tilde 0}=R\pi_1(A_{m-2})\otimes R\pi_1(A_1)=Q \otimes W.
\end{eqnarray*}

The action $\frak l_{-1} \times U_{\tilde 0} \rightarrow  U_{-}$ is given as follows, for $w_1, w_2 \in W$ such that $W=\langle w_1, w_2 \rangle$ and $q \in Q$:
\begin{eqnarray*}
(Q^* \otimes \Sym^2W^*) \times (Q \otimes W) &\rightarrow & W^* \\
    (q^*\otimes w^*_1\odot w^*_2 , q \otimes w_1) &\mapsto & q^*(q)w^*_1\odot w^*_2(w_1)=w^*_2\\
    (q^*\otimes w^*_1\odot w^*_1 , q \otimes w_1) &\mapsto & q^*(q)w^*_1\odot w^*_1(w_1)=2w^*_1.
\end{eqnarray*}

\item$(B_3, \alpha_{1}, \alpha_{3})$ where $U=V(\pi_3)$. Let $W$ be the spin representation of $B_2$. Let $V$ be the standard representation of $B_2$. Then, $V^*=V$, $\dim V=5$, $\dim W=4$, $W=W^*$ and
\begin{eqnarray*}
&\frak l_{-1}=R\pi_1(B_2)^*=V^*\\
&U_{-}=R\pi_2(B_2)^*=W^*\\
&U_{\tilde 0}=R\pi_2(B_2)=W.
\end{eqnarray*}

The action $\frak l_{-1} \times U_{\tilde 0} \rightarrow  U_{-}$ is given by the following:
\begin{center}
\begin{tabular}{|>{$}c<{$}|>{$}c<{$}|>{$}c<{$}|>{$}c<{$}|>{$}c<{$}|>{$}c<{$}|}
 \hline
   \times  & v^*_1 & v^*_2 & v^*_3 & v^*_4 & v^*_5 \\
 \hline
  w_1 & w^*_4 & w_3^* & w_2^* & \cdot & \cdot \\
 \hline
  w_2 & w_3^* & \cdot & w_1^* & w_4^* & \cdot \\
 \hline
  w_3 & w_2^* & w_1^* & \cdot & \cdot & w_4^* \\
 \hline
  w_4 & w_1^* & \cdot & \cdot  & w_3^*  & w_2^* \\
  \hline
\end{tabular}
\end{center}
 where $\{w_1, w_2, w_3, w_4\}$ is a basis  of $W$ and $\{v_{1}, v_{2}, v_{3}, v_{4}, v_{5}\}$ is a basis  of $V$.

\item$(C_m, \alpha_{m}, \alpha_{m-1})$ where $U=V(\pi_1)$. Let $W$ be the standard representation of $A_{m-1}$. Then, $\dim W = m$ and
\begin{eqnarray*}
&\frak l_{-1}=R2\pi_1(A_{m-1})^*=\Sym^2W^* \\
&U_{-}=R\pi_1(A_{m-1})^*=W^*\\
&U_{\tilde 0}=R\pi_1(A_{m-1})=W.
\end{eqnarray*}

The action $\frak l_{-1} \times U_{\tilde 0} \rightarrow  U_{-}$ is given as follows, for the orthonormal basis $w_i, w_j, w_k \in W$:
\begin{eqnarray*}
\Sym^2W^* \times W &\rightarrow & W^* \\
    (w_i^* \odot w_j^* , w_k) &\mapsto & (w_i^* \odot w_j^*) (w_k)= \delta_{jk}w_i^*+\delta_{ik}w_j^*.
\end{eqnarray*}

\item$(C_m, \alpha_{i+1}, \alpha_{i})$, $m>2$, $i=1,\ldots, m-2$, where $U=V(\pi_1)$. Let $W$ be the standard representation of $A_{i}$ and let $Q$ be the standard representation of $C_{m-i-1}$. Then, $\dim W =i+1$, $\dim Q = 2m-2i-2$ and
\begin{eqnarray*}
&\frak l_{-1}=R\pi_1(A_{i})^*\otimes R\pi_1(C_{m-i-1})^*=W^*\otimes Q^* \\
&U_{-}=R\pi_1(A_{i})^*=W^*\\
&U_{\tilde 0}=R\pi_1(C_{m-i-1})=Q.
\end{eqnarray*}

The action $\frak l_{-1} \times U_{\tilde 0} \rightarrow  U_{-}$ is given as follows, for $q \in Q$ and $w \in W$:
\begin{eqnarray*}
(W^*\otimes Q^*) \times Q &\rightarrow & W^* \\
    (w^* \otimes q^*, q) &\mapsto & w^*  q^*(q).
\end{eqnarray*}

\item$(F_{4}, \alpha_{2}, \alpha_{3})$ where $\alpha_{2}$ is a long root and $U=V(\pi_4)$. Let $W$ be the standard representation of $A_{1}$ and let $V$ be the standard representation of $A_{2}$. Then, $\dim V = 3$, $\dim W =2$, $W^*=W$ and
\begin{eqnarray*}
&\frak l_{-1}=R2\pi_1(A_{2})\otimes R\pi_1(A_{1})=\Sym^2V \otimes W \\
&U_{-}=R\pi_1(A_{2})=V\\
&U_{\tilde 0}=R\pi_1(A_{2}) ^*\otimes R\pi_1(A_{1})^*=V^* \otimes W^*.
\end{eqnarray*}

By the action $\frak l_{-1} \times U_{\tilde 0} \rightarrow  U_{-}$, for $v_1, v_2, v_3 \in V$ and $w \in W$,
\begin{eqnarray*}
(\Sym^2V\otimes W) \times (V^* \otimes W^*) &\rightarrow & V \\
    (v_1\odot v_1 \otimes w, v_1^*\otimes w^*) &\mapsto & v_1\odot v_1 (v_1^*) w(w^*)=2v_1\\
    (v_1\odot v_2 \otimes w, v_1^*\otimes w^*) &\mapsto & v_1\odot v_2 (v_1^*) w(w^*)=v_2\\
    (v_1\odot v_3 \otimes w, v_1^*\otimes w^*) &\mapsto & v_1\odot v_3 (v_1^*) w(w^*)=v_3.
\end{eqnarray*}

\item$(G_{2}, \alpha_{2}, \alpha_{1})$ where $U=V(\pi_1)$. Let $W$ be the standard representation of $A_{1}$. Then, $\dim W =2$, $W^*=W$ and
\begin{eqnarray*}
&\frak l_{-1}=R3\pi_1(A_{1})^*=\Sym^3W^* \\
&U_{-}=R\pi_1(A_{1})^*=W^*\\
&U_{\tilde 0}=R2\pi_1(A_{1})=\Sym^2W.
\end{eqnarray*}

By the action $\frak l_{-1} \times U_{\tilde 0} \rightarrow  U_{-}$,  for $w_1, w_2 \in W$,
\begin{eqnarray*}
\Sym^3W^* \times \Sym^2W &\rightarrow & W^* \\
    (w_1^* \odot w_2^* \odot w_1^*, w_1 \odot w_2) &\mapsto & w_1^* \odot w_2^* \odot w_1^*(w_1 \odot w_2)= 2w_1^*\\
    (w_1^* \odot w_2^* \odot w_2^*, w_1 \odot w_2) &\mapsto & w_1^* \odot w_2^* \odot w_2^*(w_1 \odot w_2)= 2w_2^*\\
    (w_1^* \odot w_1^* \odot w_1^*, w_1 \odot w_1) &\mapsto & w_1^* \odot w_1^* \odot w_1^*(w_1 \odot w_1)= 3w_1^*\\
    (w_1^* \odot w_1^* \odot w_2^*, w_1 \odot w_1) &\mapsto & w_1^* \odot w_1^* \odot w_2^*(w_1 \odot w_1)= 2w_2^*.
\end{eqnarray*}
\end{enumerate}

From the above list of actions $\frak l_{-1} \times U_{\tilde 0} \rightarrow  U_{-}$, we easily see that for a nonzero vector $u \in U_{\tilde 0}$, the dimension of the subspace $[\frak l_{-1},u] \subset U_{-}$ is greater than or equal to $2$.
\end{enumerate}

\end{proof}

\section{Vanishing cohomologies}\label{section3}
Let $X$ be a smooth nonhomogeneous projective horospherical variety $(L, \alpha, \beta)$ of Picard number one. Let $G=\Aut(X)$ and let $\frak g=(\frak l+\mathbb C) \rhd U$ be the corresponding Lie algebra. By Proposition \ref{tangentsp of X}, we can give a gradation of $\frak g=\bigoplus_p \frak g_p$ such that the graded Lie algebra $\frak m=\bigoplus_{p<0}\frak g_{p}$ is identified with the tangent space of $X$ at a point $x$, where $x$ is in the open $G$-orbit. Let $\frak l_{-}=\bigoplus_{p<0} \frak l_p$, $\frak l_{> 0}=\frak l_{1}+ \cdots+\frak l_{\mu(\frak l)}$ and $\frak l_{\geq 0}=\frak l_{0}+\frak l_{> 0}$. Let $U_{\tilde i}=:U_{-\mu(U)+1+i}$ for $i=-1,\cdots,l$ where $l= 2\mu(U)-1$. Let $U_{-}=U_{\widetilde{-1}}$, $U_{+}=\bigoplus_{1\leq i\leq 2\mu(U)-1}U_{\tilde i}=U_{-\mu(U)+2}+ \cdots+U_{\mu(U)}$, and  $U_{\geq \tilde 0}=U_{\tilde 0}+U_{+}$. Let $\frak m = \frak l_- + U_-$, $\frak g_{> 0}=\frak l_{> 0}+U_{+}$, and $\frak g_{\geq 0}=\frak l_{\geq 0}+U_{\geq \tilde 0}+\mathbb C$, and let $\frak m'$ be the dual of $\frak m$.

\begin{proposition}\label{H^1(g)}
Let $\frak g= (\frak l+ \mathbb C) \rhd U$ and $\frak m = \frak l_- + U_-$.
Assume that
\begin{enumerate}
  \item[\rm(1)]  if $z \in \frak l_{\geq 0}+U_{\geq \tilde 0}$ satisfies $[\frak l_{-}, z]=0$, then $z=0$;
  \item[\rm(2)]  for any vector $u \in U_{\tilde 0}$, if the dimension of the subspace $[\frak l_{-1},u] \subset U_{-}$ is less than or equal to $1$, then $u=0$.
\end{enumerate}
For $p>0$, if $H^{p,1}(\frak l_-, \frak l)=0$ and $H^{p,1}(\frak l_-, U)=0$, then $H^{p,1}(\frak m, \frak g)=0$.
\end{proposition}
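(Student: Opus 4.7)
The first observation is that $\frak m = \frak l_- \oplus U_-$ is in fact a \emph{direct} sum of Lie algebras: $[\frak l_{-k}, U_{\widetilde{-1}}]$ would lie in a $U$-eigenspace below $-\mu(U)$ and so vanishes, while $[U_-, U_-] \subset [U,U] = 0$ because $U$ is an abelian ideal of $\frak g$. Hence $U_-$ is a central ideal of $\frak m$, and any $1$-cochain $\phi\colon \frak m \to \frak g$ of degree $p$ splits according to $\frak m = \frak l_- \oplus U_-$ and $\frak g = \frak l \oplus U \oplus \mathbb C$ into six components
\begin{eqnarray*}
\phi_{ll},\ \phi_{lU},\ \phi_{l\mathbb C}\ \text{on } \frak l_-,\qquad \phi_{Ul},\ \phi_{UU},\ \phi_{U\mathbb C}\ \text{on } U_-.
\end{eqnarray*}
Expanding $\partial\phi = 0$ on pairs from $\frak l_- \wedge \frak l_-$, $\frak l_- \wedge U_-$, and $U_- \wedge U_-$, and using $[\frak l,\mathbb C]=0$, $[U,U]=0$, $[\frak l_-, U_-]=0$, one obtains: $\phi_{ll} \in Z^{p,1}(\frak l_-, \frak l)$, $\phi_{lU} \in Z^{p,1}(\frak l_-, U)$, $\phi_{l\mathbb C}$ vanishes on $[\frak l_-, \frak l_-]$, $[l, \phi_{Ul}(u)] = 0$, $[l, \phi_{UU}(u)] = [u, \phi_{ll}(l) + \phi_{l\mathbb C}(l)]$, and an equation from $U_- \wedge U_-$ involving only $\phi_{Ul}$ and $\phi_{U\mathbb C}$.

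The two vanishing hypotheses produce $\psi_\frak l \in \frak l_p$ and $\psi_U \in U_{\tilde p}$ with $\phi_{ll} = \partial\psi_\frak l|_{\frak l_-}$ and $\phi_{lU} = \partial\psi_U|_{\frak l_-}$. Crucially, $\partial\psi_U|_{U_-} = 0$ (since $[U,U]=0$) and $\partial\psi_\frak l|_{U_-}$ takes values in $U$, so the coboundary $\partial(\psi_\frak l + \psi_U)$ only modifies $\phi_{ll}, \phi_{lU}, \phi_{UU}$, and subtracting it reduces us to $\phi_{ll} = \phi_{lU} = 0$. The equation $[l, \phi_{Ul}(u)] = 0$ together with $\phi_{Ul}(u) \in \frak g_{p-1} \cap \frak l \subset \frak l_{\geq 0}$ (valid because $p \geq 1$) and hypothesis (1) of the proposition then forces $\phi_{Ul} = 0$.

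It remains to kill $\phi_{l\mathbb C}, \phi_{U\mathbb C}, \phi_{UU}$. The case $p \geq 2$ is straightforward: $\frak g_{p-1} \cap \mathbb C = 0$ yields $\phi_{U\mathbb C} = 0$; $\phi_{l\mathbb C}$ can be nonzero only on $\frak l_{-p}$, but $\frak l_{-p} \subset [\frak l_-, \frak l_-]$ by fundamentality of $\frak l_-$, so $\phi_{l\mathbb C} = 0$; and then $[l, \phi_{UU}(u)] = 0$ with $\phi_{UU}(u) \in U_+$ gives $\phi_{UU} = 0$ by hypothesis (1). The main obstacle is $p = 1$, where $\phi_{l\mathbb C}|_{\frak l_{-1}}$, $\phi_{U\mathbb C}$, and $\phi_{UU}$ are genuinely coupled. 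There the $U_-\wedge U_-$ equation simplifies, via the scalar action $[u,c] = -c u$, to
\begin{eqnarray*}
\phi_{U\mathbb C}(u_1)\, u_2 = \phi_{U\mathbb C}(u_2)\, u_1 \quad \text{for all } u_1, u_2 \in U_-,
\end{eqnarray*}
which forces $\phi_{U\mathbb C} = 0$ since $\dim U_- \geq 2$ in every case of Lemma \ref{gradation of X}. The surviving equation $[l, \phi_{UU}(u)] = -\phi_{l\mathbb C}(l)\, u$ shows that for any fixed $u \neq 0$ the subspace $[\frak l_{-1}, \phi_{UU}(u)] \subset \mathbb C u$ has dimension at most one; hypothesis (2) then gives $\phi_{UU}(u) = 0$, and the remaining identity $\phi_{l\mathbb C}(l)\, u = 0$ yields $\phi_{l\mathbb C} = 0$. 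Thus $\phi = \partial(\psi_\frak l + \psi_U)$ in every case, proving $H^{p,1}(\frak m, \frak g) = 0$.
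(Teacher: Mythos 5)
Your proof is correct and follows essentially the same route as the paper's: the same decomposition of a closed cochain by source ($\frak l_-$ or $U_-$) and target ($\frak l$, $U$, $\mathbb C$), the same use of the two vanishing hypotheses to remove the $\frak l_-$-components by a coboundary $\partial(\psi_{\frak l}+\psi_U)$, hypothesis (1) to kill the components on $U_-$ valued in $\frak l_{\geq 0}$ (and in $U_+$ when $p\geq 2$), the scalar $\mathbb C$-action together with $\dim U_-\geq 2$ to kill $\phi_{U\mathbb C}$, and hypothesis (2) for the coupled degree-one case. The only differences are organizational: you normalize by subtracting the coboundary at the start and treat $p\geq 2$ separately by degree reasons, whereas the paper argues uniformly in $p$ and, where you observe directly that $[\frak l_{-1},\phi_{UU}(u)]\subseteq \mathbb C u$ has dimension at most one, it reaches the same conclusion via the hyperplane $\{X\in\frak l_{-1}\mid \phi_{\mathbb C}(X)=0\}$.
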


\begin{proof}
For $p>0$, $C^{p,1}(\frak m, \frak g) \subset \frak m' \otimes \frak g$. Let $\phi \in C^{p,1}(\frak m, \frak g)$ such that $\partial \phi=0$. We will show that there exist $\psi \in \frak g$ such that
\begin{eqnarray*} \partial \psi =\phi. \end{eqnarray*}
Write $\phi=\phi_{\frak l}+\phi_{\mathbb C} + \phi_U$, where $\phi_{\frak l} \in \frak m' \otimes \frak l$, $\phi_{\mathbb C} \in \frak m' \otimes \mathbb C$, and $\phi_{U} \in \frak m' \otimes U$.

For any $X^{\frak l_-}, Y^{\frak l_-} \in \frak l_-$ and $X^{U_-}, Y^{U_-} \in U_-$, we have
\begin{eqnarray*}
0&=& \partial \phi(X^{\frak l_-} + X^{U_-}, Y^{\frak l_-}+Y^{U_-}) \\
&=& [X^{\frak l_-} + X^{U_-}, \phi(Y^{\frak l_-} +Y^{U_-})] - [Y^{\frak l_-} + Y^{U_-},\phi(X^{\frak l_-} +X^{U_-})] - \phi([X^{\frak l_-}, Y^{\frak l_-}]) \\
 &&\text{ because }[\frak l_- + U_{-}, U_{-}]=0 \\
&=& \left\{ [X^{\frak l_-}, \phi_{\frak l + \mathbb C}(Y^{\frak l_-} + Y^{U_-})] - [Y^{\frak l_-},\phi_{\frak l+\mathbb C}(X^{\frak l_-} + X^{U_-})] - \phi_{\frak l + \mathbb C}([X^{\frak l_-}, Y^{\frak l_-}])\right\} \\
&&+ \left\{[X^{U_-}, \phi_{\frak l + \mathbb C}(Y^{\frak l_-} + Y^{U_-})] - [Y^{U_-}, \phi_{\frak l+ \mathbb C}(X^{\frak l_-} + X^{U_-})]\right. \\
 &&+ \left.[X^{\frak l_-}, \phi_U(Y^{\frak l_-} +Y^{U_-})] - [Y^{\frak l_-},\phi_U(X^{\frak l_-} + X^{U_-})] - \phi_U([X^{\frak l_-}, Y^{\frak l_-}])  \right\}.
\end{eqnarray*}
Thus,
\begin{eqnarray*}
 0&\stackrel{(\star)}{=}& \phi_{\mathbb C}([X^{\frak l_-}, Y^{\frak l_-}]) \\
 0&\stackrel{(\ast)}{=}& [X^{\frak l_-}, \phi_{\frak l }(Y^{\frak l_-} + Y^{U_-})] - [Y^{\frak l_-},\phi_{\frak l}(X^{\frak l_-} + X^{U_-})] - \phi_{\frak l}([X^{\frak l_-}, Y^{\frak l_-}])\\
 0&\stackrel{(\diamond)}{=}& [X^{U_-}, \phi_{\frak l}(Y^{\frak l_-} + Y^{U_-})] - [Y^{U_-}, \phi_{\frak l}(X^{\frak l_-} + X^{U_-})] \\
 &&  + [X^{U_-}, \phi_{\mathbb C}(Y^{\frak l_-} + Y^{U_-})] - [Y^{U_-}, \phi_{\mathbb C}(X^{\frak l_-} + X^{U_-})] \\
 &&  + [X^{\frak l_-}, \phi_U(Y^{\frak l_-} +Y^{U_-})] - [Y^{\frak l_-},\phi_U(X^{\frak l_-} + X^{U_-})] - \phi_U([X^{\frak l_-}, Y^{\frak l_-}]).
\end{eqnarray*}

Put $X^{U_-}=Y^{U_-}=0$ into ($\ast$) to get
\begin{eqnarray} \label{1}
[X^{\frak l_-}, \phi_{\frak l}(Y^{\frak l_-})] - [Y^{\frak l_-},\phi_{\frak l}(X^{\frak l_-})] - \phi_{\frak l}([X^{\frak l_-}, Y^{\frak l_-}])=0.
\end{eqnarray}
Put $Y^{\frak l_-}=0$ into ($\ast$) to get
\begin{eqnarray} \label{2}
[X^{\frak l_-}, \phi_{\frak l}(Y^{U_-})]=0.
\end{eqnarray}
Put $Y^{\frak l_-}=0$ and $X^{U_-}=0$ into ($\diamond$) to get
\begin{eqnarray} \label{3}
[X^{\frak l_-}, \phi_U(Y^{U_-})]-[Y^{U_-},\phi_{\frak l}(X^{\frak l_-})]-[Y^{U_-},\phi_{\mathbb C}(X^{\frak l_-})]=0.
\end{eqnarray}
Put $Y^{\frak l_-}=0$ and $X^{\frak l_-}=0$ into ($\diamond$) to get
\begin{eqnarray} \label{4}
&&[X^{U_-},\phi_{\frak l}(Y^{U_-})]-[Y^{U_-},\phi_{\frak l}(X^{U_-})] \\
&&+[X^{U_-},\phi_{\mathbb C}(Y^{U_-})]-[Y^{U_-},\phi_{\mathbb C}(X^{U_-})]=0.\nonumber
\end{eqnarray}
Put $X^{U_-}=Y^{U_-}=0$ into ($\diamond$) to get
\begin{eqnarray} \label{5}
[X^{\frak l_-},\phi_U(Y^{\frak l_-})] - [Y^{\frak l_-},\phi_U(X^{\frak l_-})] - \phi_U([X^{\frak l_-}, Y^{\frak l_-}])=0.
\end{eqnarray}

By (\ref{1}) and (\ref{5}), we have $\partial(\phi_{\frak l}+\phi_{U})(X^{\frak l_-},Y^{\frak l_-})=0$. By hypothesis, $H^{p,1}(\frak l_-, \frak l)=0$ and $H^{p,1}(\frak l_-, U)=0$ for $p>0$.  Then, there exist $\psi=\psi_{\frak l}+\psi_{U}$, where $\psi_{\frak l} \in \frak l$ and $\psi_{U} \in U$ such that
\begin{eqnarray} \label{6}
\partial\psi(X^{\frak l_-}) = (\phi_{\frak l}+ \phi_{U})(X^{\frak l_-}).
\end{eqnarray}

In (\ref{2}), since $X^{\frak l_-} \in \frak l_{-}$ is arbitrary, by assumption (1), we have
\begin{eqnarray}\label{2-2}
\phi_{\frak l}(Y^{U_-})=0.
\end{eqnarray}
In (\ref{4}), by (\ref{2-2}), we see that
\begin{eqnarray} \label{4-2}
[X^{U_-}, \phi_{\mathbb C}(Y^{U_-})]-[Y^{U_-},\phi_{\mathbb C}(X^{U_-})]=0.
\end{eqnarray}
Equation (\ref{4-2}) is also valid for the two linearly independent vectors $X^{U_-}$ and $Y^{U_-}$, and $\mathbb C$ act on $U$ as scalars. Hence,
\begin{eqnarray} \label{4-3}
\phi_{\mathbb C}(X^{U_-})=0.
\end{eqnarray}

We will show that $\partial \psi = \phi$ where $\phi=\phi_{\frak l}+ \phi_{U}+\phi_{\mathbb C}$. By ($\star$), (\ref{6}), (\ref{2-2}), and (\ref{4-3}), it suffices to show that
$\phi_{\mathbb C}(X^{\frak l_-})=0$ for all $X^{\frak l_{-}} \in \frak l_{-1}$ and $\partial \psi(X^{U_-}) = \phi_{U}(X^{U_-})$ for all $X^{U_-} \in U_-$.

Let $X^{\frak l_{-}} \in \frak l_{-1}$. In (\ref{3}), we have
\begin{eqnarray*}
[Y^{U_-},\phi_{\mathbb C}(X^{\frak l_-})]&=&[X^{\frak l_-}, \phi_U(Y^{U_-})]-[Y^{U_-},\phi_{\frak l}(X^{\frak l_-})]\\
                                    &=&[X^{\frak l_-}, \phi_U(Y^{U_-})]-[Y^{U_-},[X^{\frak l_-},\psi_{\frak l}]] \text{ because } \phi_{\frak l}(X^{\frak l_-})= \partial \psi_{\frak l}(X^{\frak l_-})\nonumber \\
                                    &=&[X^{\frak l_-}, \phi_U(Y^{U_-})]-[X^{\frak l_-},[Y^{U_-},\psi_{\frak l}]] \text{ because }[\frak l_-, U_{-}]=0 \nonumber\\
                                    &=&[X^{\frak l_-},(\phi_U(Y^{U_-})-[Y^{U_-},\psi_{\frak l}])].
\end{eqnarray*}
Hence, for $X^{\frak l_{-}} \in \frak l_{-1}$ and $Y^{U_-} \in U_-$,
\begin{eqnarray}\label{3-2}
[Y^{U_-},\phi_{\mathbb C}(X^{\frak l_-})]=[X^{\frak l_-},\phi_U(Y^{U_-})-\partial \psi_{\frak l}(Y^{U_-})].
\end{eqnarray}

Because $\mathbb C$ act on $U$ as a scalar, left side of (\ref{3-2}) is in $U_-$ and hence, the $\phi_U(Y^{U_-})-\partial \psi_{\frak l}(Y^{U_-})$ in the bracket of right side is in $U_{\tilde 0}$

From the decomposition $\phi_U(Y^{U_-})-\partial \psi_{\frak l}(Y^{U_-})=\bigoplus_{i}(\phi_U(Y^{U_-})-\partial \psi_{\frak l}(Y^{U_-}))_{\tilde i}$, where $(\phi_U(Y^{U_-})-\partial \psi_{\frak l}(Y^{U_-}))_{\tilde i} \in U_{\tilde i}$, we have
\begin{eqnarray}\label{3-2-2}
[Y^{U_-},\phi_{\mathbb C}(X^{\frak l_-})]&=&[X^{\frak l_-},(\phi_U(Y^{U_-})-\partial \psi_{\frak l}(Y^{U_-}))_{\tilde 0}] \\
 0&=&[X^{\frak l_-},(\phi_U(Y^{U_-})-\partial \psi_{\frak l}(Y^{U_-}))_{\tilde i}] \text{ for } \tilde i \neq \tilde 0.\nonumber
\end{eqnarray}
For $X^{\frak l_-} \in \{X^{\frak l_-} \in \frak l_{-1} | \phi_{\mathbb C}(X^{\frak l_-})=0 \}$,
\begin{eqnarray*} \label{3-3}
0&=&[X^{\frak l_-},(\phi_U(Y^{U_-})-\partial \psi_{\frak l}(Y^{U_-}))_{\tilde 0}].
\end{eqnarray*}
Since $\{X^{\frak l_-} \in \frak l_{-1} | \phi_{\mathbb C}(X^{\frak l_-})=0 \} \subset \frak l_{-1}$ is a hyperplane or $\frak l_-$, by assumption (2),
\begin{eqnarray} \label{3-4}
0&=&(\phi_U(Y^{U_-})-\partial \psi_{\frak l}(Y^{U_-}))_{\tilde 0}.
\end{eqnarray}
By (\ref{3-2-2}) and (\ref{3-4}), for any $Y^{U_-} \in U_-$ and $X^{\frak l_{-}} \in \frak l_{-1}$,
\begin{eqnarray*} \label{3-4-1}[Y^{U_-},\phi_{\mathbb C}(X^{\frak l_-})]=0.\end{eqnarray*}
Hence, for any $X^{\frak l_{-}} \in \frak l_{-1}$,
\begin{eqnarray} \label{3-5}\phi_{\mathbb C}(X^{\frak l_-})=0.\end{eqnarray}

By (\ref{3-5}), equation (\ref{3}) becomes
\begin{eqnarray} \label{3-6}
[X^{\frak l_-}, \phi_U(Y^{U_-})]=[Y^{U_-},\phi_{\frak l}(X^{\frak l_-})]=[Y^{U_-},\partial \psi_{\frak l}(X^{\frak l_-})].
\end{eqnarray}
Write $\partial \psi=(\partial \psi)_{\frak l}+(\partial \psi)_{\mathbb C} + (\partial \psi)_U$ where $(\partial \psi)_{\frak l} \in \frak m' \otimes \frak l$, $(\partial \psi)_{\mathbb C} \in \frak m' \otimes \mathbb C$ and $(\partial \psi)_U \in \frak m' \otimes U$.
Then,
\begin{eqnarray*}
\partial \psi (X^{\frak l_-}) &= [X^{\frak l_-},\psi] &=[X^{\frak l_-}, \psi_{\frak l}] + [X^{\frak l_-}, \psi_{U}]\\
\partial \psi (X^{U_-}) &= [X^{U_-}, \psi] &=  [X^{U_-},\psi_{\frak l}].
\end{eqnarray*}
Thus,
\begin{eqnarray}
\label{3-6-1}(\partial \psi)_{\mathbb C} &=&0\\
\label{3-6-2}(\partial \psi)_{\frak l}(X^{U_-}) &=& 0,
\end{eqnarray}
and
\begin{eqnarray*}
(\partial \psi)_{\frak l}(X^{\frak l_-}) &=& [X^{\frak l_-}, \psi_{\frak l}] \\
(\partial \psi)_U(X^{\frak l_-}) &=& [X^{\frak l_-}, \psi_U] \\
(\partial \psi)_U(X^{U_-}) &=&[X^{U_-}, \psi_{\frak l}].
\end{eqnarray*}
In particular, this implies that
\begin{eqnarray*}
[X^{\frak l_-},(\partial \psi)_U(X^{U_-})] &=& [X^{\frak l_-},[X^{U_-}, \psi_{\frak l}]] \\
&=& [X^{U_-},[X^{\frak l_-}, \psi_{\frak l}]] \text{ because } [\frak l_-, U_-]=0 \\
&=& [X^{U_-}, (\partial \psi)_{\frak l}(X^{\frak l_-})].
\end{eqnarray*}
Hence, by (\ref{3-6}),
\begin{eqnarray}\label{3-7}
[X^{\frak l_-},(\partial \psi)_U(X^{U_-})] = [X^{\frak l_-}, \phi_U(X^{U_-})].
\end{eqnarray}
Since $X^{\frak l_-} \in \frak l_{-}$ is arbitrary in (\ref{3-7}), by assumption (1), we have
\begin{eqnarray*}
(\partial \psi)_{U}(X^{U_-})= \phi_{U}(X^{U_-}).
\end{eqnarray*}
Hence, by (\ref{3-6-1}) and (\ref{3-6-2}),
\begin{eqnarray}\label{3-8}
(\partial \psi)(X^{U_-})= \phi_{U}(X^{U_-}).
\end{eqnarray}

It follows that $\partial \psi =\phi$. Therefore, $H^{p,1}(\frak m, \frak g) =0$ for any positive integer $p$.
\end{proof}

\begin{lemma}\label{H^1(U)}$H^{p,1}(\frak l_-, U)=0$ for $p>0$.\end{lemma}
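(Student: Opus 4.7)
The plan is a direct cocycle argument exploiting that $\frak l_-$ is fundamental, so any $\phi \in C^{p,1}(\frak l_-, U)$ is determined by its restriction $\phi_{-1}\colon \frak l_{-1} \to U_{\widetilde{p-1}}$, where $U_{\tilde k}$ (in the notation of Proposition \ref{tangentsp of X}) denotes the component of $U$ placed in $\frak g$-degree $k$. Recall that $C^{p,1}(\frak l_-, U) = \bigoplus_{j \leq -1} \Hom(\frak l_j, U_{\widetilde{j+p}})$, so the bigrading translates exactly into a target-degree shift by $p$.

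For a cocycle $\phi$ with $p > 0$, I would look for a coboundary witness $\psi \in U_{\tilde p}$ satisfying $x \cdot \psi = \phi_{-1}(x)$ for every $x \in \frak l_{-1}$; once found, the cocycle relation on $\frak l_{-1} \wedge \frak l_{-1}$ propagates the equality to the higher graded pieces $\phi_j$ for $j < -1$, yielding $\partial \psi = \phi$ throughout $\frak l_-$. Two subcases arise. When $p$ is so large that $U_{\tilde p} = 0$ (i.e.\ $p-1$ exceeds the top $\frak g$-degree of $U$), the candidate $\psi$ must vanish, and one has to show from the cocycle condition $\phi([x,y]) = x\phi(y) - y\phi(x)$ on $\frak l_{-1} \wedge \frak l_{-1}$, together with the explicit $\frak l_0$-module actions listed in Lemma \ref{fun}, that $\phi_{-1} = 0$. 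When $U_{\tilde p} \neq 0$, the $\frak l_0$-equivariant bracket $\frak l_{-1} \otimes U_{\tilde p} \to U_{\widetilde{p-1}}$ supplies the candidate, and one must verify that the induced map $\psi \mapsto (x \mapsto x \cdot \psi)$ from $U_{\tilde p}$ to the cocycle subspace of $\Hom(\frak l_{-1}, U_{\widetilde{p-1}})$ is surjective.

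The main obstacle is the case-by-case verification over the six families classified in Lemma \ref{gradation of X}. Each requires identifying $\frak l_{-1}$, $U_{\tilde p}$ and $U_{\widetilde{p-1}}$ as specific irreducible $\frak l_0$-representations via the classical-group tensor models recorded in the proof of Lemma \ref{fun}, and then either (in the subcase $U_{\tilde p} = 0$) collapsing $\phi_{-1}$ to zero through direct contraction in the cocycle relation, or (in the subcase $U_{\tilde p} \neq 0$) exhibiting $\psi$ using the bracket map. I expect the exceptional $(F_4, \alpha_2, \alpha_3)$ and $(G_2, \alpha_2, \alpha_1)$ cases, together with the $(B_m, \alpha_{m-1}, \alpha_m)$ chain where several positive values of $p$ can occur, to be the most delicate verifications.
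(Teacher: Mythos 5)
Your reduction framework is sound: since $\frak l_-$ is generated by $\frak l_{-1}$, a cocycle is determined by its restriction to $\frak l_{-1}$, and your degree bookkeeping ($\psi\in U_{\tilde p}$, $\phi_{-1}\colon\frak l_{-1}\to U_{\widetilde{p-1}}$) is correct. But the argument stops exactly where the content of the lemma begins. The two subcases you isolate --- showing $\phi_{-1}=0$ when $U_{\tilde p}=0$, and showing that every cocycle restriction has the form $x\mapsto x\cdot\psi$ when $U_{\tilde p}\neq 0$ --- are announced, not carried out, and they are not routine verifications: the restrictions to $\frak l_{-1}$ of cocycles do not fill out $\Hom(\frak l_{-1},U_{\widetilde{p-1}})$, but are cut out by conditions coming from the kernel of the bracket $\wedge^2\frak l_{-1}\to\frak l_{-2}$ and, when $\mu(\frak l)\geq 2$, from the components on $\frak l_{-1}\wedge\frak l_{-j}$ as well. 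Identifying this space therefore requires the full graded $\frak l_0$-module structure of $U$ and of $\frak l_-$, for each positive $p$ in the range where $U_{\widetilde{p-1}}\neq 0$, in each of the six families; the tables in Lemma \ref{fun} record only the single component $\frak l_{-1}\times U_{\tilde 0}\to U_-$ and do not supply this data. As it stands, the proposal is a plan for a substantial case-by-case computation rather than a proof, and the cases you flag as delicate are precisely the ones left open.

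By contrast, the paper settles all cases at once with Kostant's theorem: $U$ is an irreducible $\frak l$-module and $\frak l_-$ is the nilradical of the maximal parabolic associated with the single simple root $\alpha$, so $H^{1}(\frak l_-,U)$ is a single irreducible $\frak l_0$-module with lowest weight vector $z'_\alpha\otimes u_{-\sigma_\alpha(\lambda)}\in\frak l'_{-1}\otimes U_-$. Since the grading element is central in $\frak l_0$, it acts by one scalar on this irreducible module, so the whole cohomology lies in the bidegree of that vector, namely in $H^{0,1}(\frak l_-,U)$, and hence $H^{p,1}(\frak l_-,U)=0$ for $p>0$. If you push your direct approach to completion you will essentially be re-deriving this fact family by family; the uniform route is both shorter and less error-prone, so I recommend either adopting it or at least using it to check any hand computations you do carry out.
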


\begin{proof}
Let $z_{\alpha} \in \frak l$ be a root vector associated with the simple root $\alpha$. Let $\sigma_{\alpha}$ be the simple reflection associated with $\alpha$. Let $\lambda$ be the highest weight of $\frak l$ on the irreducible representation $U$. Then, $-\lambda$ is the lowest weight on $U$. Let $u_{-\sigma_{\alpha}(\lambda)} \in U$ be the weight vector with weight $-\sigma_{\alpha}(\lambda)$.

By Theorem 5.15 of \cite{Ko}, we have
\begin{eqnarray*}H^1(\frak l_{-}, U) = \mathcal H^{\xi_\sigma},\end{eqnarray*}
where $\mathcal H^{\xi_\sigma}$ is the irreducible $\frak l_0$-module with the lowest weight vector $z_{\alpha}' \otimes u_{-\sigma_{\alpha}(\lambda)}$ having weight $\xi_\sigma =-(\sigma_{\alpha}(\lambda)+\alpha)$.

Since $z_{\alpha}' \otimes u_{-\sigma_{\alpha}(\lambda)} \in \frak l_{-1}' \otimes U_{-}$ and $\mathcal H^{\xi_\sigma}=\frak l_{-1}' \otimes U_{-}=H^{0,1}(\frak l_-, U)$, we see that $H^{p,1}(\frak l_-, U)=0$ for $p>0$.
\end{proof}

\begin{proposition}\label{H^1(')} Let $\frak g=\bigoplus_p \frak g_{p}$ be a graded Lie algebra given in Proposition \ref{tangentsp of X}. $H^{p,1}(\frak m, \frak g)=0$ for $p > 0$.\end{proposition}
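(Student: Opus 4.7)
The plan is a direct application of Proposition \ref{H^1(g)}. Once its two structural hypotheses are verified, together with the two vanishings $H^{p,1}(\frak l_-, U) = 0$ and $H^{p,1}(\frak l_-, \frak l) = 0$ for $p > 0$, we immediately obtain $H^{p,1}(\frak m, \frak g) = 0$ for $p > 0$.

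Hypothesis (1) of Proposition \ref{H^1(g)} — that $z \in \frak l_{\geq 0} + U_{\geq \tilde 0}$ with $[\frak l_-, z] = 0$ must vanish — follows a fortiori from Lemma \ref{fun}(2), since $\frak l_{-1} \subset \frak l_-$ and that lemma even allows the weaker condition $[\frak l_{-1}, z] = 0$ to force $z = 0$. Hypothesis (2), the lower bound $\dim[\frak l_{-1}, u] \geq 2$ for nonzero $u \in U_{\tilde 0}$, is the contrapositive of Lemma \ref{fun}(3). The vanishing $H^{p,1}(\frak l_-, U) = 0$ for $p > 0$ is Lemma \ref{H^1(U)}.

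The last and deepest ingredient is $H^{p,1}(\frak l_-, \frak l) = 0$ for $p > 0$. Here one exploits that $\frak l$ is simple in all six cases of Theorem \ref{Horospheical} and that its grading is the standard one associated to the single simple root $\alpha$ used to define $E_X$. By Lemma \ref{C.prolongation}, the desired vanishing is equivalent to $\frak l$ being the universal prolongation of $\frak l_-$. Kostant's harmonic theory identifies $H^1(\frak l_-, \frak l)$ as a sum of irreducible $\frak l_0$-modules indexed by the length-one elements of the Hasse diagram, with explicit lowest weights; Yamaguchi's prolongation theorem (\cite{Ya}) then provides the required prolongation property for every simple graded pair $(\frak l, \alpha)$ outside a short explicit list — namely the projective gradings of type $A_\ell$ and the contact gradings on simple Lie algebras of type different from $A$. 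The main obstacle is a case-by-case check against the Dynkin data of $B_m$, $C_m$, $F_4$, and $G_2$: using Lemma \ref{gradation of X}, one verifies that none of the six pairs in Theorem \ref{Horospheical} is of type $A_\ell$, and that in each case the chosen node $\alpha$ is not the unique simple root producing the contact grading. Once those checks are in hand, Proposition \ref{H^1(g)} assembles the conclusion.
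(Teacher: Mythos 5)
Your skeleton is exactly the paper's proof: apply Proposition \ref{H^1(g)}, check its hypothesis (1) via Lemma \ref{fun}(2) and hypothesis (2) via Lemma \ref{fun}(3), quote Lemma \ref{H^1(U)} for $H^{p,1}(\frak l_-,U)=0$, and quote Yamaguchi for $H^{p,1}(\frak l_-,\frak l)=0$. The trouble is in how you fill in the Yamaguchi step. First, by Lemma \ref{C.prolongation} the vanishing of $H^{p,1}(\frak l_-,\frak l)$ for $p>0$ is equivalent (given the nondegeneracy condition) to $\frak l$ being the prolongation of the \emph{pair} $(\frak l_-,\frak l_0)$, not the universal prolongation of $\frak l_-$; the latter also requires $H^{0,1}=0$ and genuinely fails in several of the six cases (all the depth-one gradings, e.g.\ $(B_3,\alpha_1)$ and $(C_m,\alpha_m)$, and the contact-type gradings below), so the statement you propose to verify is both stronger than needed and false in those cases. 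Correspondingly, Yamaguchi's exceptional list for the pair-prolongation (equivalently for $H^{p,1}=0$, $p\geq 1$) consists only of $(A_\ell,\alpha_1)$, $(A_\ell,\alpha_\ell)$ and the contact grading $(C_\ell,\alpha_1)$; it is the prolongation of $\frak m$ alone whose exceptions include \emph{all} contact gradings, and your hybrid list (``projective $A_\ell$ and all non-$A$ contact gradings'') is not the right list for either statement.

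Second, and concretely, the case-check you assert --- that in each of the six cases the grading root $\alpha$ is not the contact node --- is false. For $(G_2,\alpha_2,\alpha_1)$ the grading root $\alpha_2$ \emph{is} the contact node of $G_2$: the highest root is $3\alpha_1+2\alpha_2$, and the $\alpha_2$-grading has depth $2$ with $\dim\frak l_{\pm2}=1$ (compare Lemma \ref{gradation of X}(6)). Likewise in case (1) with $m=3$ the node $\alpha_{m-1}=\alpha_2$ is the contact node of $B_3$. So, taking your exceptional list at face value, your argument would break precisely in these cases. The conclusion is nevertheless correct: with the correct exceptional list one only needs to observe from Lemma \ref{gradation of X} that $\frak l$ is never of type $A$ and that in the $C_m$ cases the grading root is $\alpha_m$ or $\alpha_{i+1}$ with $i\geq 1$, never the contact node $\alpha_1$; the depth-one and non-$C$ contact gradings occurring among the six cases contribute only to $H^{0,1}$, which is irrelevant for $p>0$. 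With that repair, the rest of your proposal coincides with the paper's argument.
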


\begin{proof}
By Lemma of \cite{Ya}, $H^{p,1}(\frak l_{-}, \frak l)=0$ for $p>0$. By Lemma \ref{H^1(U)}, $H^{p,1}(\frak l_-, U)=0$ for $p>0$. By (2) and (3) of Lemma \ref{fun}, the two assumptions of Proposition \ref{H^1(g)} are satisfied. Hence, $H^{p,1}(\frak m, \frak g)=0$ for $p>0$.
\end{proof}

\begin{proposition}\label{Prolongation} Let $\frak g=\bigoplus_p \frak g_{p}$ be a graded Lie algebra given in Proposition \ref{tangentsp of X}. The Lie algebra $\frak g$ is the prolongation of $(\frak m, \frak g_0)$. \end{proposition}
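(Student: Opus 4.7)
The plan is to apply Lemma \ref{C.prolongation}, which reduces the statement that $\frak g$ is the prolongation of $(\frak m, \frak g_0)$ to the verification of two conditions: first, that any $z \in \frak g_p$ with $p \geq 0$ satisfying $[z, \frak m] = 0$ must vanish, and second, that $H^{p,1}(\frak m, \frak g) = 0$ for all $p \geq 1$.

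The first condition is already essentially packaged in Lemma \ref{fun}(2), which asserts that if $z \in \frak g_{\geq 0}$ satisfies $[z, \frak g_{-1}] = 0$, then $z = 0$. Since $\frak g_{-1} \subset \frak m$, any element annihilating $\frak m$ a fortiori annihilates $\frak g_{-1}$, so the hypothesis of Lemma \ref{fun}(2) is fulfilled and condition (1) of Lemma \ref{C.prolongation} follows immediately.

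The second condition is precisely the content of Proposition \ref{H^1(')}, which was just established by combining Kostant-type vanishing for the semisimple part $\frak l$ (via Proposition \ref{H^1(g)}, Lemma \ref{H^1(U)}, and the cited cohomology vanishing $H^{p,1}(\frak l_-, \frak l) = 0$) with the two structural assumptions verified in Lemma \ref{fun}(2)--(3). Thus both hypotheses of Lemma \ref{C.prolongation} are satisfied, and we conclude that $\frak g = \frak g(\frak m, \frak g_0)$.

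There is no real obstacle here: the proposition is a direct corollary of the two previous results, and the proof amounts to citing them and invoking Lemma \ref{C.prolongation}. The genuine work has already been done in Section \ref{section3}, where the vanishing of the first generalized Spencer cohomology was reduced via Proposition \ref{H^1(g)} to the semisimple case and then computed using Kostant's theorem.
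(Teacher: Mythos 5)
Your proposal is correct and follows essentially the same route as the paper: it invokes Lemma \ref{C.prolongation}, verifying condition (1) via Lemma \ref{fun}(2) and condition (2) via Proposition \ref{H^1(')}. Nothing is missing.
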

\begin{proof}
By Lemma \ref{fun}, $\frak m=\bigoplus_{p<0}\frak g_{p}$ is fundamental, i.e., $\frak g_p=[\frak g_{p+1}, \frak g_{-1}]$ for $p<-1$ and if $z\in \frak g_p$ for $p\geq 0$ satisfies $[z, \frak g_{-1}]=0$, then $z=0$. By Proposition \ref{H^1(')}, $H^{p,1}(\frak m, \frak g)=0$ for $p\geq 1$.
Hence, according to Lemma \ref{C.prolongation}, the graded Lie algebra $\frak g=\bigoplus\frak g_p$ of Proposition \ref{tangentsp of X} is the prolongation of $(\frak m, \frak g_0)$.
\end{proof}

\section{Geometric structures}\label{section4}

Let $X$ be a smooth nonhomogeneous projective horospherical variety of Picard number one. Let $X^o$ be the open orbit of $X$ with respect to $G=\Aut(X)$. Let $\frak g$ be the Lie algebra of $G$. We recall, from Proposition \ref{tangentsp of X}, that there is a gradation of $\frak g$ such that $\frak m=\bigoplus_{p<0} \frak g_p$ is fundamental and
\begin{eqnarray}\label{fund} \iota: T_xX^o\simeq\frak m
\end{eqnarray}
for a base point $x\in X^o$.

\begin{definition}\label{Flat Horo}Let $(X^o,E)$ be the regular differential system of type $\frak m$ derived from the subbundle $E$ of $TX^o$, where $E_x$ corresponds to $\frak g_{-1}$ under the identification $T_xX^o\simeq\frak m$ for a base point $x\in X^o$. Let $G_0\subset G_{0}(\frak m)$ be the Lie subgroup corresponding to $\frak g_0$. Let $\mathscr R$ be the frame bundle of $(X^o,E)$. Then, $\mathscr R$ is isomorphic to $G\times_H G_0(\frak m)$, where $H=H(\frak m, G_0)$.

The $G_0$-subbundle $\mathscr P$ of $\mathscr R$, which is isomorphic to the $G_0$-subbundle $G\times_H G_0$ of $G\times_H G_0(\frak m)$, is a $G_0$-structure on $(X^o,E)$ referred to as \emph{the standard geometric structure} on $X$.
\end{definition}

\begin{definition}\label{geometric ST} Let $\frak m=\bigoplus_{p<0} \frak g_p$ be the fundamental graded Lie algebra given by Proposition \ref{tangentsp of X}, which satisfies (\ref{fund}). Let $M$ be a projective manifold. A \emph{distribution} $D$ is a subbundle of $T(M)$, which is defined on outside of a subvariety $\Sing(D)$ of $M$. Suppose that
\begin{enumerate}
  \item any point $x \in M^o:= M-\Sing(D)$ is general and $\Sing(D)$ has codimension at least two;
  \item $(M^o,D)$ is a regular filtered manifold derived from a regular differential system of type $\frak m$.
\end{enumerate}
A $G_0$-structure on $(M^o,D)$ is called a \emph{geometric structure of $M$ modeled on $X$}(See Definition \ref{G_0 structure}).

Two geometric structures of $M_1$ and $M_2$ modeled on $X$ are \emph{locally equivalent} if the $G_0$-structure on $({M_1}^o,D_1)$ and the $G_0$-structure on $({M_2}^o,D_2)$ are locally equivalent. A geometric structure modeled on $X$ is \emph{locally flat} if it is locally equivalent to the standard geometric structure on $X$.
\end{definition}

\begin{proposition}\label{Cartan connection}Let $X$ be a smooth nonhomogeneous horospherical variety $(L, \alpha, \beta)$ of Picard number one. Let $G=\Aut(X)$ and let $\frak g=(\frak l+\mathbb C) \rhd U$ be the corresponding Lie algebra. As in Proposition \ref{tangentsp of X}, we give a gradation on the Lie algebra $\frak g$. Let $H:=H(\frak m, G_0)$ be the Lie subgroup of $G$ associated with $\frak h=\bigoplus_{p \geq 0} \frak g_p$ (See (\ref{H})). Let $\frak m=\bigoplus_{p<0}\frak g_{p}$ and let $G_0$ be the Lie subgroup of $G$ corresponding to $\frak g_0$. Let $(M,F)$ be a regular filtered manifold of type $\frak m$. Then, for a given $G_0$-structure on $(M,F)$, there exists a Cartan connection of type $(\frak g, H)$ so that two $G_0$-structures on $(M,F)$ are (locally) equivalent when the associated Cartan connections of type $(\frak g, H)$ are (locally) equivalent. \end{proposition}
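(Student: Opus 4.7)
The plan is to apply Theorem \ref{connection}. By Proposition \ref{Prolongation}, $\frak g$ coincides with the prolongation $\frak g(\frak m, \frak g_0)$, which is finite-dimensional because $\frak g$ is the Lie algebra of an algebraic group. It therefore remains only to exhibit an ambient graded Lie algebra $\tilde{\frak g}$, an invariant symmetric bilinear form $(\cdot,\cdot)$, and an involution $\tau$ satisfying the four compatibility conditions of Theorem \ref{connection}.

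The natural choice is suggested by Proposition \ref{imbedding}: take $\tilde{\frak g} = \frak{gl}(V)$ with $V = V_{\alpha} \oplus V_{\beta}$, let $(\cdot,\cdot)$ be the rescaled Cartan--Killing form on $\frak{gl}(V)$, and set $\tau(z) = -z^{*}$, where $z^{*} = \overline{z}^{t}$ relative to an orthonormal basis of $V$. Then $\{z_1, z_2\} := -(z_1, \tau z_2) = (z_1, z_2^{*})$ is the positive-definite Hermitian inner product already established in Proposition \ref{imbedding}(3), and $\tau$ is an involutive Lie-algebra automorphism of $\tilde{\frak g}$. To make this data compatible with the grading of $\frak g$, I would introduce a grading element $\tilde E \in \frak{gl}(V)$ acting as $E_{\alpha}$ on $V_{\alpha}$ and as $E_{\alpha} + (\mu(U)-1)\,\mathrm{Id}$ on $V_{\beta}$, and set $\tilde{\frak g}_p = \{X \in \frak{gl}(V) : [\tilde E, X] = pX\}$. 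The shift $\mu(U)-1$ on $V_{\beta}$ is forced by Proposition \ref{tangentsp of X}, where an element of $U$ of $E_{\alpha}$-eigenvalue $k$ is declared to have grade $k + \mu(U) - 1$ in $\frak g$.

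A direct eigenvalue computation then yields $\frak g_p \subset \tilde{\frak g}_p$ for every $p$; since conjugate transposition reverses $E_{\alpha}$-eigenvalues (because $E_{\alpha}^{*} = E_{\alpha}$), one also obtains $\tau(\frak g_p) \subset \tilde{\frak g}_{-p}$ for $p \geq 0$, verifying condition (2). The remaining conditions are then formal. For (1), the orthogonality $\{\frak g_p, \frak g_q\} = 0$ whenever $p \neq q$ holds because $z_1 z_2^{*}$ then lies in a nonzero-graded piece of $\frak{gl}(V)$, and any nonzero-graded element of $\frak{gl}(V)$ has vanishing trace. For (3), $\mathrm{ad}$-invariance of $(\cdot,\cdot)$ together with the identity $\tau(A)^{*} = -A$ reduces both sides of $\{[A, z_1], z_2\} = -\{z_1, [\tau(A), z_2]\}$ to $-(z_1, [A, z_2^{*}])$. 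For (4), I would take $\tau_0 \colon G_0 \to \tilde G_0$ to be the integration of $\tau|_{\frak g_0}$ to the connected group $G_0$; its image lies in $\tilde G_0 \subset \Aut(\tilde{\frak m})$ because the Cartan involution preserves the block decomposition defining $\tilde{\frak m}$, and the group-level identity follows from (3) by differentiation.

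The main obstacle is condition (2): producing a grading on $\tilde{\frak g}$ which simultaneously extends the grading of $\frak g$ and is reversed by $\tau$. Since the grading of $\frak g$ is a nontrivial shift of the natural $E_{\alpha}$-grading on the $U$-part, one cannot simply use $\tilde E = E_{\alpha}$; the modification $\tilde E|_{V_{\beta}} = E_{\alpha} + (\mu(U)-1)\,\mathrm{Id}$ is the unique choice that absorbs this shift and simultaneously respects $\tau$. Verifying in each of the five families of Theorem \ref{Horospheical} that this prescription is consistent with the concrete $E_{\alpha}$-eigenvalue computations of Lemma \ref{gradation of X} is the main bookkeeping content of the argument.
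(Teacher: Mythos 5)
Your proposal is correct and follows essentially the same route as the paper: apply Theorem \ref{connection} with $\tilde{\frak g}=\frak{gl}(V)$, the (rescaled) Killing/trace form, $\tau(z)=-z^{*}$, positivity from Proposition \ref{imbedding}(3), and an extension of the grading of $\frak g$ to $\frak{gl}(V)$ obtained by shifting the $E_{\alpha}$-eigenvalue decomposition, with prolongation and finite-dimensionality supplied by Proposition \ref{Prolongation}. Your packaging of the shift into the single grading element $\tilde E=E_{\alpha}+(\mu(U)-1)\,\mathrm{Id}_{V_{\beta}}$ is just a cleaner rendering of the paper's ``shift the gradation on the off-diagonal blocks by $\pm j$ en bloc,'' since $\ad\tilde E$ produces exactly those opposite shifts on $\Hom(V_{\alpha},V_{\beta})$ and $\Hom(V_{\beta},V_{\alpha})$ while leaving the diagonal blocks and $\mathbb C$ unshifted.
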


\begin{proof}
We will apply Theorem \ref{connection} to $\frak g=(\frak l+\mathbb C) \rhd U$. As in Proposition \ref{tangentsp of X}, the element $E_X=E_{\alpha} \in \frak g_0$ gives the gradation of $\frak g$. By Lemma \ref{fun} (1), $\frak m$ is a fundamental graded Lie subalgebra. By Proposition \ref{Prolongation}, the Lie algebra $\frak g$ is the prolongation of $(\frak m, \frak g_0)$.

By Proposition \ref{imbedding}, let $\tilde{\frak g}=\frak{gl}(V)$ which contains $\frak g$ and $\frak g^*$. The Killing form on $\frak{gl}(V)$ itself is an $\ad(\frak g(\frak m,\frak g_0))$-invariant symmetric bilinear form $(\cdot , \cdot)$ such that the restricted inner product $\{\cdot, \cdot \}$ on $\frak g$ is a positive definite Hermitian inner product. This proves (1) of Theorem \ref{connection}.

We can give a gradation on $\frak{gl}(V)$ by the element $E_X$. Since $V$ is a representation space of $\frak l$, we have decomposition $\frak{gl}(V)=\bigoplus_k\frak{gl}(V)_k$, where $\frak{gl}(V)_k$ is eigenspace of $E_X$ with eigenvalue $k$. We shift this decomposition to give a gradation on $\frak{gl}(V)$ satisfying (2) of Theorem \ref{connection}.

Since $U \subset V_{\alpha}\otimes V^*_{\beta}$, we shift the gradation on $V_{\alpha}\otimes V^*_{\beta}$ to make it the extended gradation of the gradation on $U$. Then, $\frak g_{p} \subset \tilde{\frak g}_{p}$ for any integer $p$.

We also shift the gradation on $U^*$ and extend it to $V^*_{\alpha}\otimes V_{\beta}$ such that $\tau(\frak g_p)\subset\tilde{\frak g}_{-p}$ for $p\geq 0$. More precisely, we can do this as follows: We have $\tau(E_X)=-E_X$, and for $z_1, z_2 \in \frak g$, $[\tau(z_1), \tau(z_2)]=\tau([z_1,z_2])$. Hence, for $z \in \frak g_p$ such that $[E_X,z]=kz$ where $k \in \mathbb Z$, we see that
\begin{eqnarray*}[E_X,\tau(z)]=-[\tau(E_X),\tau(z)]=-\tau([E_X,z])=-k\tau(z).\end{eqnarray*}
In Proposition \ref{tangentsp of X}, if one shifted gradation on $U_k$ by $j$, i.e. $p=k+j$, then one shift gradation on $U^*_{-k}:=(U_k)^*$ by $-j$, and also shift gradation on $V^*_{\alpha}\otimes V_{\beta}$ by $-j$ en bloc.

The remaining conditions (3) and (4) of Theorem \ref{connection} are clear.
\end{proof}

The next proposition is proved in \cite{Hw15} providing the essence of Theorem 4.1 in \cite{BM}.

\begin{proposition}[From Proposition 2.9 of \cite{Hw15}]\label{positive rational curve} Let $M$ be a manifold. Assume that there exists a non-constant holomorphic map $f\colon \mathbb P^1\rightarrow M$ such that $f^* T(M)$ is a positive vector bundle, i.e., $f^*T(M)\cong\mathcal O(a_1)\oplus\cdots\oplus\mathcal O(a_n)$ where $a_i \geq 1$ and $n=\dim M$. Let $H$ be a closed connected subgroup of a connected Lie group $G$. Let $\frak g$ be the Lie algebra of $G$. Then, any Cartan connection on $M$ of type $(\frak g, H)$ is locally flat.
\end{proposition}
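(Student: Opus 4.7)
The strategy is to show that the curvature of the Cartan connection vanishes on a neighborhood of the image of $f$, from which local equivalence to the homogeneous model $G\to G/H$ follows by integrating the Maurer-Cartan equation. Analyticity and connectivity then propagate local flatness across $M$. So the main task is to reduce to a statement about a section of a vector bundle on $\mathbb P^1$ and show it must vanish.

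First I would reformulate the problem in terms of the curvature function. The curvature 2-form $\Omega=d\omega+\tfrac12[\omega,\omega]\in\Omega^2(P,\frak g)$ is horizontal and $H$-equivariant, so through the coframe identification $\omega_p\colon T_pP\xrightarrow{\sim}\frak g$ it determines an $H$-equivariant function $\kappa\colon P\to \wedge^2(\frak g/\frak h)^*\otimes \frak g$. Using $TM\cong P\times_H(\frak g/\frak h)$, this descends to a tensor $\bar\kappa\in\Gamma(M,\wedge^2 T^*M\otimes \mathcal A)$, where $\mathcal A=P\times_H\frak g$ is the adjoint tractor bundle. Vanishing of $\bar\kappa$ on an open set is equivalent to $\omega$ satisfying the Maurer-Cartan equation there, which by a standard integration argument produces a local bundle morphism to $(G\to G/H,\omega_{MC})$; this is the content of local flatness.

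Next I would pull $\bar\kappa$ back along $f$ and exploit positivity. Writing $f^*TM\cong\bigoplus_i\mathcal O(a_i)$ with $a_i\geq 1$, every summand of $f^*\wedge^2 T^*M$ has degree $-a_i-a_j\leq -2$. The short exact sequence $0\to \operatorname{ad}P\to \mathcal A\to TM\to 0$ pulls back and, after tensoring with $f^*\wedge^2T^*M$, gives a filtration of $f^*(\wedge^2T^*M\otimes \mathcal A)$ whose graded quotients involve $f^*\wedge^2T^*M\otimes f^*\operatorname{ad}P$ and $f^*\wedge^2T^*M\otimes f^*TM$. The aim is to show that $f^*\bar\kappa$ lies in a sub-bundle whose line summands all have strictly negative degree, hence has no nonzero global section. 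I expect the main obstacle to lie precisely here: the naive degree count on $f^*(\wedge^2T^*M\otimes TM)$ gives summand degrees $-a_i-a_j+a_k$, which need not all be negative. The saving refinements are the algebraic identities satisfied by $\bar\kappa$ (skew-symmetry in the $\wedge^2T^*M$-slots, $H$-equivariance, and the first Bianchi identity $d^\omega\Omega=0$), which restrict the pullback to a proper sub-bundle on which the positivity count does succeed; this is the accounting carried out in Proposition~2.9 of \cite{Hw15} (and in the related Theorem~4.1 of \cite{BM}).

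Granting the vanishing $f^*\bar\kappa=0$, I would conclude as follows. The positivity assumption implies the deformations of $f$ are unobstructed and sweep out a neighborhood $V\supseteq f(\mathbb P^1)$ in $M$: for every $y\in V$ there is a deformation $f_y\colon\mathbb P^1\to M$ with $y\in f_y(\mathbb P^1)$ and $f_y^*TM$ still positive. Applying the previous step to each $f_y$ gives $\bar\kappa\equiv 0$ on $V$, and by analytic continuation on the connected component of $M$ containing $V$. Thus $(P,\omega)$ is locally isomorphic to the Maurer-Cartan model $(G,\omega_{MC})$ near $f(\mathbb P^1)$, and the Cartan connection is locally flat in the sense of the paper's definition. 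As noted, the sole technical obstacle is the cohomological vanishing in the previous paragraph, which requires more than dimension counting alone.
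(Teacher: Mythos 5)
Your reduction to the vanishing of the curvature along the rational curve, and the endgame (deformations of $f$ sweep out an open set, the identity theorem propagates $\bar\kappa\equiv 0$, and flatness gives local equivalence to the model $G\to G/H$), all match the paper. But the central step, $f^*\bar\kappa=0$, is left unproved, and the route you sketch for it does not work as stated: filtering $\mathcal A=P\times_H\frak g$ by $0\to\ad P\to\mathcal A\to TM\to 0$ and counting degrees produces the summands $-a_i-a_j+a_k$ you mention, and neither skew-symmetry, nor $H$-equivariance, nor the Bianchi identity forces the ``torsion'' component of $\bar\kappa$ (the part landing in $\wedge^2T^*M\otimes TM$) into a sub-bundle of negative degree for a general Cartan geometry. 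Deferring this point to Proposition 2.9 of \cite{Hw15} is circular, since that is precisely the statement you are asked to prove.

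The missing idea is to exchange the filtration argument for a connection on $\mathcal A$ itself. Extend the structure group: set $\tilde P=P\times_H G$; the Cartan connection $\omega$ induces a holomorphic Ehresmann connection $\tilde\omega$ on $\tilde P$, and under the identification $\mathcal A\cong\ad\tilde P$ the curvature $K(\tilde\omega)$ is identified with $\bar\kappa$. This is exactly how the paper argues, following Section 3 of \cite{BM}. Pulling back along $f$, the bundle $f^*\mathcal A\cong f^*\ad\tilde P$ carries the pulled-back holomorphic connection on $\mathbb P^1$, hence (Weil/Atiyah: every indecomposable summand of a bundle with a holomorphic connection on a curve has degree $0$) it is trivial, $f^*\mathcal A\cong\mathcal O^{\oplus\dim\frak g}$. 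Then every line summand of $f^*(\wedge^2T^*M\otimes\mathcal A)$ has degree $-a_i-a_j\leq -2$, so the holomorphic section $f^*\bar\kappa$ vanishes; this is the content of the proof of Theorem 3.1 of \cite{Bi} that the paper invokes. With that substitution your argument closes; without it, the cohomological vanishing on which everything rests is not established.
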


\begin{proof}[Sketch of the proof] Given a Cartan connection $\omega$ on a principal $H$-bundle $P \rightarrow M$, we can associate a principal $G$-bundle $\tilde{P} \rightarrow M$ with an Ehresmann connection $\tilde\omega$ as in Section 3 of \cite{BM}. For a curve $f_t \colon \mathbb P^1 \rightarrow M$ with positive ${f_t}^*T(M)$, we see that ${f_t}^*K(\tilde\omega)=0$, where $K(\tilde\omega)$ is the curvature of the connection $\tilde\omega$. We can see the vanishing of that curvature along a curve with positive tangents in the proof of Theorem 3.1 in \cite{Bi}.

By assumption, there is a non-constant holomorphic map $f\colon \mathbb P^1\rightarrow M$ such that $f^* T(M)$ is a positive vector bundle. Then, there exists a family of holomorphic maps
 \begin{eqnarray*}
 \{f_t \colon \mathbb P^1 \rightarrow M | t \in \triangle^k, f_t^*T(M) \text{ positive}\}
 \end{eqnarray*}
 parametrized by a polydisc $\triangle^k$, for some $k>0$ such that the union of their images $\cup_{t \in \triangle^k}f_t(\mathbb P^1)$ contains a nonempty open subset $U$ of $M$.
 For a nonempty open set $U \subset M$, the curvature $K(\tilde\omega)$ vanishes on $U$ as stated above and hence vanishes on the whole space $M$. Thus, $\tilde\omega$ is locally flat on $M$, which implies that $\omega$ is locally flat on $M$. \end{proof}

The following is from Proposition 7.9 of \cite{Hw15}, which is well known from Proposition \textrm{II}.3.7 and Theorem \textrm{IV}.3.7 of \cite{Kol}.

\begin{proposition}\label{codim2} Let $M$ be a uniruled projective manifold of Picard number one. Then, for any subvariety $Z\subset M$ of codimension two, there exists $f:\mathbb P^1\rightarrow M$ with $f(\mathbb P^1)\cap Z=\emptyset$ and $f^*T(M)$ is positive.\end{proposition}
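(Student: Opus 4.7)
The plan is to first produce a rational curve $f_0\colon\mathbb P^1\to M$ with $f_0^*T(M)$ already positive, and then to perturb $f_0$ within its deformation space so that the image avoids $Z$ while preserving positivity. Since positivity of $f^*T(M)$ is an open condition on $f\in\Hom(\mathbb P^1,M)$, the second step only needs to show that the locus of morphisms near $[f_0]$ whose image meets $Z$ is a proper subset.

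For the first step, I would establish that $M$ admits a \emph{very free} rational curve, i.e.\ one with $f_0^*T(M)$ ample. Because $M$ is smooth, projective, uniruled, and of Picard number one, it is Fano: the Boucksom--Demailly--Paun--Peternell theorem gives that $K_M$ is not pseudo-effective, and together with $\rho(M)=1$ this forces $-K_M$ to be ample. Fano manifolds are rationally connected by Campana and by Koll\'ar--Miyaoka--Mori, and on a rationally connected smooth projective variety a general rational curve through a general point is very free (Koll\'ar, Chapter IV). This produces $f_0\colon\mathbb P^1\to M$ with $f_0^*T(M)$ ample.

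For the second step, ampleness of $f_0^*T(M)$ yields $H^1(\mathbb P^1,f_0^*T(M))=0$, so $\Hom(\mathbb P^1,M)$ is smooth at $[f_0]$, and the evaluation map
\begin{equation*}
\mathrm{ev}\colon \Hom(\mathbb P^1,M)\times\mathbb P^1\longrightarrow M
\end{equation*}
is a submersion at $([f_0],t)$ for every $t\in\mathbb P^1$. Hence $\mathrm{ev}^{-1}(Z)$ has, in a neighbourhood of $\{[f_0]\}\times\mathbb P^1$, pure codimension equal to $\operatorname{codim}_M(Z)=2$. Projecting to $\Hom(\mathbb P^1,M)$ drops dimension by at most $\dim\mathbb P^1=1$, so the image is a proper closed subset of a neighbourhood of $[f_0]$. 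Any $[f]$ in its complement satisfies $f(\mathbb P^1)\cap Z=\emptyset$, and by openness of ampleness we may further assume $f^*T(M)$ remains positive.

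The main technical input is concentrated in the first step, which chains together BDPP, the Miyaoka--Mori rational connectedness of Fano manifolds, and Koll\'ar's construction (attaching free teeth to a free curve and smoothing the resulting comb) of very free curves on rationally connected varieties. Once $f_0$ is in hand, the avoidance of $Z$ is a purely dimensional argument that uses nothing beyond the ampleness of $f_0^*T(M)$ and $\operatorname{codim}_M(Z)\geq 2$.
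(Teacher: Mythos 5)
Your argument is correct and follows essentially the paper's own route: the paper simply quotes Proposition II.3.7 and Theorem IV.3.7 of \cite{Kol} (deformations of a free curve can be made to avoid a codimension-two subset, and rationally connected smooth projective varieties carry very free curves), which are exactly the two steps you prove by hand via the evaluation-map dimension count and the openness of positivity. Your preliminary reduction — uniruled plus Picard number one forces $-K_M$ ample, hence $M$ is Fano, rationally connected, and carries a very free curve — is the standard way to supply the input that the paper delegates to Proposition 7.9 of \cite{Hw15}, so nothing essential is missing.
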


\begin{theorem}\label{Main Theorem} Let $X$ be a smooth nonhomogeneous projective horospherical variety of Picard number one. Let $M$ be a Fano manifold of Picard number one. Then, any geometric structure of $M$ modeled on $X$ is locally equivalent to the standard geometric structure on $X$.
\end{theorem}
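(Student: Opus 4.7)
The plan is to stitch together the three main propositions that have been accumulated. Start with a geometric structure on $M$ modeled on $X$: by Definition \ref{geometric ST}, this is a $G_0$-structure on $(M^o, D)$ where $M^o = M - \Sing(D)$ is the complement of a subvariety of codimension at least two, and $(M^o, D)$ is a regular filtered manifold of type $\frak m$. Apply Proposition \ref{Cartan connection} to this $G_0$-structure to obtain a Cartan connection $(P, \omega)$ of type $(\frak g, H)$ on $M^o$. By the equivalence statement in that same proposition, local equivalence of the given $G_0$-structure to the standard one on $X$ (Definition \ref{Flat Horo}) reduces to local equivalence of their associated Cartan connections; the Cartan connection associated to the standard geometric structure is the one on the principal $H$-bundle $G \to G/H = X^o$ with the Maurer-Cartan form. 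Hence it suffices to show that $\omega$ is locally flat in the sense of the definition in Section \ref{section1}.

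To secure local flatness I would produce a rational curve inside $M^o$ along which $T(M)$ is ample and then invoke Proposition \ref{positive rational curve}. Since $M$ is Fano of Picard number one it is uniruled, so Proposition \ref{codim2}, applied with $Z = \Sing(D)$ (which has codimension at least two), supplies a non-constant $f\colon \mathbb{P}^1 \to M$ whose image is contained in $M^o$ and for which $f^*T(M)$ is positive. Because $TM|_{M^o} = T(M^o)$, this provides the rational curve on $M^o$ required as input to Proposition \ref{positive rational curve}, and that proposition then forces the Cartan connection $(P, \omega)$ to be locally flat on $M^o$.

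The three steps above finish the argument. There is no substantial technical obstacle at this stage: once Proposition \ref{Cartan connection} is in hand (which was the genuinely hard part, requiring the vanishing of generalized Spencer cohomology in Section \ref{section3}) and the flatness criterion of Proposition \ref{positive rational curve} is available, the proof is essentially bookkeeping. The one point that must be watched is that the Cartan connection only lives on the open subset $M^o$, not on all of $M$; this is precisely why one cannot just invoke uniruledness and take any rational curve on $M$, and why Proposition \ref{codim2} (which produces positive rational curves avoiding a prescribed codimension-two subvariety) is the correct tool.
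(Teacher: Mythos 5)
Your proposal is correct and follows essentially the same route as the paper: obtain the Cartan connection on $M^o$ from Proposition \ref{Cartan connection}, use Proposition \ref{codim2} to find a positive rational curve avoiding $\Sing(D)$, and conclude local flatness via Proposition \ref{positive rational curve}. Your explicit remark that local flatness identifies the connection with the Maurer--Cartan form on $G\rightarrow G/H\cong X^o$, and hence (via the equivalence statement of Proposition \ref{Cartan connection}) gives local equivalence to the standard structure, merely spells out a step the paper leaves implicit.
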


\begin{proof}
 Let $M$ be a Fano manifold of Picard number one. A $G_0$-structure on $(M^o,D)$ is given by the geometric structure of $M$ modeled on $X$, where $D$ is a subbundle of $T(M)$ with singularity $\Sing(D)$ and $M^o=M-\Sing(D)$. By Proposition \ref{Cartan connection}, that regular filtered manifold $(M^o,D)$ of type $\frak m$ admits a Cartan connection on $M^o$ of type $(\frak g, H)$.

 Since $M$ is a uniruled projective manifold and the subvariety $\Sing(D)$ has codimension at least two in $M$, by Proposition \ref{codim2}, there is a rational curve $f\colon\mathbb P^1\rightarrow M$ such that $f(\mathbb P^1)\cap \Sing(D)=\emptyset$ and $f^*T(M)$ is positive. We apply Proposition \ref{positive rational curve} to $M^o= M-\Sing(D)$; thus, the Cartan connection on $M^o$ of type $(\frak g, H)$ is locally flat.

 To conclude, a geometric structure of $M$ modeled on $X$ is locally equivalent to the standard geometric structure on $X$.
\end{proof}

\section*{Acknowledgements}
The paper based on author's thesis at Seoul National University. We thank Jaehyun Hong for her kind guidance and discussion throughout this project, Jun-Muk Hwang for his overall advice and comments of the manuscript.

We would like to show our gratitude to the A. Huckleberry, B. Pasquier, B. M. Doubrov, C. Robles and D. The for sharing their knowledge and ways of understanding with us during the visiting Korea Institute of Advanced Study, and also to A.\u{C}ap, J. Slov\'ak and L. Manival at Differential Geometry and its Application 2016 in Brno. We also thank to all reviewers for their thoughtful comments that greatly improved the manuscript.

We also thank to K.-D. Park, M. Kwon and Q. Li for their reading the previous versions of the manuscript.

\end{document}